\setlist[enumerate,1]{label=(\arabic*),ref=(\arabic*)}
\setlist[enumerate,2]{label=(\alph*),ref=\theenumi. (\alph*)}
\crefname{equation}{equation.}{equations.}
\numberwithin{equation}{subsection}
\theoremstyle{definition}
\newtheorem{theorem}{Theorem}[section]
\crefname{theorem}{Theorem}{Theorems}
\newtheorem{maintheorem}[theorem]{Main Theorem}
\crefname{maintheorem}{Main Theorem}{Main Theorems}
\newtheorem{lemma}[theorem]{Lemma}
\crefname{lemma}{Lemma}{Lemmas}
\newtheorem{proposition}[theorem]{Proposition}
\crefname{proposition}{Proposition}{Propositions}
\newtheorem{corollary}[theorem]{Corollary}
\crefname{corollary}{Corollary}{corollaries}
\newtheorem{definition}[theorem]{Definition}
\crefname{definition}{Definition}{Definitions}
\newtheorem{definitionproposition}[theorem]{Definition-Proposition}
\crefname{definitionproposition}{Definition-Proposition}{Definition-Proposirions}
\newtheorem{example}[theorem]{Example}
\crefname{example}{Example}{Examples}
\newtheorem{remark}[theorem]{Remark}
\crefname{remark}{Remark}{Remarks}
\newcommand{\setmid}{\; \middle|\;}
\newcommand{\lmod}{\operatorname{\mathrm{\hspace{-2pt}-mod}}}
\newcommand{\Kernel}{\operatorname{\mathrm{Ker}}}
\newcommand{\Hom}{\operatorname{Hom}\nolimits}
\newcommand{\induc}{{\operatorname{Ind}\nolimits}}
\newcommand{\restr}{{\operatorname{Res}\nolimits}}
\newcommand{\add}{\operatorname{\mathrm{add}}}
\newcommand{\stautilt}{\operatorname{\mathrm{s\tau-tilt}}}
\newcommand{\inertiagp}{I}
\newcommand{\Id}{\mathrm{Id}}
\newcommand{\Addresses}{{
  \bigskip
  \footnotesize
  Ryotaro~KOSHIO\par\nopagebreak
  \textsc{Department of Mathematics, Tokyo University of Science}
	\par\nopagebreak
	1-3, Kagurazaka, Shinjuku-ku, Tokyo, 162-8601, Japan
  \par\nopagebreak
  E-mail: \href{mailto:1120702@ed.tus.ac.jp}{1120702@ed.tus.ac.jp}
}}
\title{On induced modules of inertial-invariant support \(\tau\)-tilting modules over blocks of finite groups\footnote{\emph{Mathematics Subject Classification} (2020). 20C20, 16G10.
}
\footnote{\emph{Keywords.} Support \(\tau\)-tilting modules, Induced modules, Blocks of finite groups.}}
\author{Ryotaro~KOSHIO}
\date{\today}
\begin{document}
\maketitle
\begin{abstract}
	In this article, we prove that induced modules of support \(\tau\)-tilting modules over blocks of finite groups satisfying inertial-invariant condition are also support \(\tau\)-tilting modules.
\end{abstract}
\section{Introduction and notation}\label{intro 2021-11-04 11:17:58}
Support \(\tau\)-tilting modules introduced in \cite{MR3187626} form an important class of modules.
These correspond bijectively to various representation theorical objects, such as two-term silting complexes, functorially finite torsion classes, left finite semibricks, two-term simple-minded collections and more (see \cite{MR3187626,MR4139031,MR3220536,MR3178243}).
Let \(k\) be an algebraically closed field of characteristic \(p>0\), \(\tilde{G}\) a finite group, \(G\) a normal subgroup of \(\tilde{G}\), \(B\) a block of \(kG\) and \(\tilde{B}\) a block of \(k\tilde{G}\) covering \(B\), that is, the block of \(k\tilde{G}\) satisfying that \(1_B 1_{\tilde{B}} \neq 0\), where \(1_B\) and \(1_{\tilde{B}}\) mean the respective unit elements of \(B\) and \(\tilde{B}\).
We denote the inertial group of the block \(B\) in \({\tilde G}\) by \(\inertiagp_{\tilde G}(B)\)
and the second group cohomology of the factor group \(\inertiagp_{\tilde{G}}(B)/G\) with coefficients in the unit group \(k^\times\) of the field \(k\) with trivial action by \(H^2(\inertiagp_{\tilde{G}}(B)/G,k^\times)\).
In \cite{KK2}, construction-methods of support \(\tau\)-tilting modules over \(\tilde{B}\) from the ones over \(B\) using the induction functor \(\induc_G^{\tilde{G}}\) were presented under the following conditions:
\begin{enumerate}
	\item Any left finite brick \(U\) in the category of \(B\)-module is \(\inertiagp_{\tilde G}(B)\)-invariant, that is, \(xU\cong U\) as \(B\)-modules for any \(x\in \inertiagp_{\tilde G}(B)\).
	\item \(H^2(\inertiagp_{\tilde{G}}(B)/G,k^\times)=1\).
	\item The group algebra \(k[\inertiagp_{\tilde{G}}(B)/G]\) is basic as a \(k\)-algebra.
\end{enumerate}
This paper presents the following results, which relaxes the assumptions above.
First, we state a construction-method of support \(\tau\)-tilting modules over \(k\tilde{G}\) from the ones over \(kG\).
\begin{maintheorem}[{see \cref{main group algebra 2022-06-09 01:38:53,main block 2022-07-13 08:57:19}}]
	Let \(\tilde{G}\) be a finite group, \(G\) a normal subgroup of \(\tilde{G}\), \(B\) a block of \(kG\), \(\tilde{B}\) a block of \(k\tilde{G}\) covering \(B\) and \(M\) a support \(\tau\)-tilting \(B\)-modules satisfying \(xM\cong M\) as \(B\)-modules for any \(x\in \inertiagp_{\tilde G}(B)\).
	Then the induced module \(\induc_G^{\tilde{G}}M\) is a support \(\tau\)-tilting \(k\tilde{G}\)-module.
	In particular, the module \(\tilde{B}\induc_G^{\tilde{G}}M\) is a support \(\tau\)-tilting \(\tilde{B}\)-module.
\end{maintheorem}
We will demonstrate that there is an relation between \(\inertiagp_{\tilde{G}}(B)\)-invariant support \(\tau\)-tilting \(B\)-modules and support \(\tau\)-tilting \(k\tilde{G}\)-modules.
Now we recall that the set of support \(\tau\)-tilting \(\stautilt \Lambda\) has a partially ordered set structure for any finite dimensional algebra (see \cref{order 2022-08-30 16:55:02}).
\begin{maintheorem}[{see \cref{iff induc 2022-08-28 16:22:55}}]
	Let \(\tilde{G}\) be a finite group, \(G\) a normal subgroup of \(\tilde{G}\), \(B\) a block of \(kG\), \(\tilde{B}\) a block of \(k\tilde{G}\) covering \(B\) and \(M\) a \(B\)-modules satisfying \(xM\cong M\) as \(B\)-modules for any \(x\in \inertiagp_{\tilde G}(B)\).
	Then \(M\) is a support \(\tau\)-tilting \(B\)-module if and only if \(\induc_G^{\tilde{G}}M\) is a support \(\tau\)-tilting \(k\tilde{G}\)-module.
	Moreover, for any two \(\inertiagp_{\tilde G}(B)\)-invariant support \(\tau\)-tilting \(B\)-modules \(M\) and \(M'\), \(M\geq M'\) in \(\stautilt B\) if and only if \(\induc_G^{\tilde{G}}M\geq \induc_G^{\tilde{G}}M'\) in \(\stautilt k\tilde{G}\).
\end{maintheorem}

Throughout this paper, we use the following notation and terminologies.
\(k\) means an algebraically closed field of characteristic \(p>0\).
Let \(\Lambda\) be a finite dimensional algebra over a field \(k\).
Modules mean finitely generated left modules.
We denote by \(\Lambda\lmod\) the module category of \(\Lambda\).
For a \(\Lambda\)-module \(U\), we denote by \(P(U)\) the projective cover of \(U\), by \(\Omega(U)\) the syzygy of \(U\), by \(\tau U\) the Auslander--Reiten translate of \(U\) and by \(\add U\) the full subcategory of \(\Lambda\lmod\) whose objects are isomorphic to direct summands of finite direct sums of \(U\).

This paper is organized as follows.
In \cref{section preliminaries 2022-07-13 09:11:11}, we introduce basic terminologies and some known results for \(\tau\)-tilting theory and modular representation theory of finite groups.
In \cref{section main 2022-07-13 09:12:20}, we give some lemmas and the main result, and present applications and examples.

\section{Preliminaries}\label{section preliminaries 2022-07-13 09:11:11}
In this section, \(\Lambda\) means a finite dimensional \(k\)-algebra.
\subsection{Support \texorpdfstring{\(\tau\)}{tau}-tilting modules}
We recall the definitions and basic properties of support \(\tau\)-tilting modules.
For a \(\Lambda\)-module \(M\), we denote by \(|M|\) the number of isomorphism classes of indecomposable direct summands of \(M\).
In particular, \(|\Lambda|:=|{}_\Lambda \Lambda |\) means the number of isomorphism classes of simple \(\Lambda\)-modules.
\begin{definition}[{\cite[Definition 0.1]{MR3187626}}]
	Let \(\Lambda\) be a finite dimensional \(k\)-algebra and \(M\) a \(\Lambda\)-module.
	\begin{enumerate}
		\item We say that \(M\) is \(\tau\)-rigid if \(\Hom_{\Lambda}(M,\tau M)=0\).
		\item We say that \(M\) is \(\tau\)-tilting if \(M\) is a \(\tau\)-rigid module and \(|M|=|\Lambda|.\)
		\item We say that \(M\) is support \(\tau\)-tilting if there exists an idempotent \(e\) of \(\Lambda\) such that \(M\) is a \(\tau\)-tilting \(\Lambda/\Lambda e\Lambda \)-module.
	\end{enumerate}
\end{definition}
For support \(\tau\)-tilting \(\Lambda\)-modules \(M\) and \(M'\), we write \(M\sim_{\add} M'\) if \(\add M=\add M'\).
Then the relation \(\sim_{\add}\) is an equivalence relation.
We denote \(\stautilt \Lambda\) the set of equivalence classes of all support \(\tau\)-tilting \(\Lambda\)-modules under the equivalence relation \(\sim_{\add}\).
\begin{definitionproposition}[{\cite[Theorem 2.7]{MR3187626}}]\label{order 2022-08-30 16:55:02}
	For \(M, M' \in \stautilt \Lambda\), we write \(M\geq M'\) if there exist a positive integer \(r\) and an epimorphism
	\begin{equation}
		\begin{tikzcd}
			M^{\oplus r}\ar[r,twoheadrightarrow,"\varphi"]& M'.
		\end{tikzcd}
	\end{equation}
	Then we get a partial order on \(\stautilt \Lambda\).
\end{definitionproposition}
We denote by \(\mathcal{H}(\stautilt \Lambda)\) the Hasse diagram for the partially ordered set \(\stautilt \Lambda\).

\begin{remark}[{\cite[Proposition 2.3 (a), (b)]{MR3848421}}]\label{tau number remark 2021-09-07 12:11:53}
	Since \(e=0\) is an idempotent of \(\Lambda\) and \(\Lambda/\Lambda e\Lambda =\Lambda\), any \(\tau\)-tilting module is a support \(\tau\)-tilting module.
	Moreover, for any \(\tau\)-rigid \(\Lambda\)-module \(M\), the following conditions are equivalent:
	\begin{enumerate}
		\item \(M\) is a support \(\tau\)-tilting module.
		\item There exist a projective \(\Lambda\)-module \(P\) satisfying \(\Hom_\Lambda(P,M)=0\) and \(|M|+|P|=|\Lambda|\).
	\end{enumerate}
\end{remark}
\begin{proposition}[{\cite[Corollary 2.13]{MR3187626}}]\label{tau tilting pair 2022-08-27 08:26:25}
	Let \(M\) be a \(\tau\)-rigid \(\Lambda\)-module and \(P\) a projective \(\Lambda\)-module satisfying that \(\Hom_\Lambda(P,M)=0\).
	Then the following conditions are equivalent:
	\begin{enumerate}
		\item \(|M|+|P|=|\Lambda|\), that is, \(M\) is a support \(\tau\)-tilting \(\Lambda\)-module (see \cref{tau number remark 2021-09-07 12:11:53}).
		\item If \(\Hom_\Lambda(M,\tau X)=0\), \(\Hom_\Lambda(X,\tau M)=0\) and \(\Hom_\Lambda(P,X)=0\), then \(X\in \add M\) for any \(\Lambda\)-module \(X\).
	\end{enumerate}
\end{proposition}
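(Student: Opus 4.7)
The plan is to combine Bongartz completion for \(\tau\)-rigid pairs (with the projective witness held fixed) with the Auslander--Reiten formula, following the approach of \cite[\S 2]{MR3187626}.

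For (1)\(\Rightarrow\)(2), assume \(|M|+|P|=|\Lambda|\) and let \(X\) satisfy the three vanishing conditions. Consider the functorially finite torsion class
\[
	\mathcal{T}(M,P) := \{Y \in \Lambda\lmod \mid \Hom_\Lambda(Y,\tau M)=0,\ \Hom_\Lambda(P,Y)=0\},
\]
which, under hypothesis (1), coincides with \(\operatorname{Fac} M\) via the bijection of \cite{MR3187626} between support \(\tau\)-tilting modules and functorially finite torsion classes. The conditions \(\Hom_\Lambda(X,\tau M)=0\) and \(\Hom_\Lambda(P,X)=0\) place \(X\) in \(\mathcal{T}(M,P)\), while the remaining condition \(\Hom_\Lambda(M,\tau X)=0\) translates via the Auslander--Reiten formula into Ext-projectivity of \(X\) in \(\mathcal{T}(M,P)\). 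Since the Ext-projectives of this torsion class are precisely \(\add M\), we conclude \(X \in \add M\).

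For (2)\(\Rightarrow\)(1), apply Bongartz completion to \((M,P)\) keeping the projective witness fixed: one obtains \(N\) with no indecomposable summand in \(\add M\) such that \((M\oplus N,P)\) is a support \(\tau\)-tilting pair, hence \(|M\oplus N|+|P|=|\Lambda|\). Every indecomposable summand \(N_0\) of \(N\) satisfies \(\Hom_\Lambda(M,\tau N_0)=\Hom_\Lambda(N_0,\tau M)=0\) by the \(\tau\)-rigidity of \(M\oplus N\), and \(\Hom_\Lambda(P,N_0)=0\) by the pair structure \(\Hom_\Lambda(P,M\oplus N)=0\). Hypothesis (2) then forces \(N_0 \in \add M\), contradicting the choice of \(N\); hence \(N=0\) and \(|M|+|P|=|\Lambda|\).

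The main obstacle is the foundational input from \cite[Theorem 2.10]{MR3187626}: one must know that Bongartz completion can be carried out with the projective part \(P\) kept fixed, and that the resulting torsion class \(\mathcal{T}(M,P)\) admits the clean Ext-projective description used above. With those results available, the plan above proceeds by direct application of the Auslander--Reiten formula.
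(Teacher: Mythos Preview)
The paper does not supply its own proof of this proposition; it is quoted verbatim as \cite[Corollary 2.13]{MR3187626} and used as a black box. Your sketch reconstructs the standard argument from that reference: the forward direction via the identification \(\operatorname{Fac} M = {}^{\perp}(\tau M)\cap P^{\perp}\) together with the Auslander--Smal{\o} criterion for Ext-projectivity, and the converse via Bongartz completion of the pair \((M,P)\) with the projective part held fixed. Both steps are handled correctly, and you rightly isolate the one nontrivial input (that the completion in \cite[Theorem 2.10]{MR3187626} leaves \(P\) unchanged), so there is nothing further to compare.
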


The following proposition plays an important role in the proof of our main result.
\begin{proposition}[{\cite[Proposition 2.14]{MR3428959}}]\label{lem app 2022-06-02 16:07:34}
	Let \(\Lambda\) be a finite dimensional \(k\)-algebra and \(M\) a \(\tau\)-rigid \(\Lambda\)-module.
	Then \(M\) is a support \(\tau\)-tilting \(\Lambda\)-module if and only if there exists an exact sequence
	\begin{equation}
		\begin{tikzcd}
			\Lambda\ar[r,"f"]&M'\ar[r,"f'"]&M''\ar[r]&0
		\end{tikzcd}
	\end{equation}
	in \(\Lambda\lmod\) with \(M', M''\in \add M\) and \(f\) a left \(\add M\)-approximation of \(\Lambda\).
\end{proposition}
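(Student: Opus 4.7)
The plan is to use \cref{tau tilting pair 2022-08-27 08:26:25} (the $\tau$-tilting pair characterization) as the bridge between the support $\tau$-tilting condition and the existence of the approximation sequence.

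For the forward direction, I would write $M$ as a $\tau$-tilting module over $\bar\Lambda := \Lambda / \Lambda e \Lambda$ for the idempotent $e$ coming from the support $\tau$-tilting hypothesis. The analogous statement for genuinely $\tau$-tilting modules (due to Adachi--Iyama--Reiten) produces an exact sequence $\bar\Lambda \to M' \to M'' \to 0$ in $\bar\Lambda\lmod$ with $M', M'' \in \add M$ and the first arrow a left $\add M$-approximation. Precomposing with the canonical surjection $\pi : \Lambda \twoheadrightarrow \bar\Lambda$ yields an exact sequence in $\Lambda\lmod$ with the required properties: the left-approximation property is preserved because every $N \in \add M$ is annihilated by $\Lambda e \Lambda$, so every $\Lambda$-map $\Lambda \to N$ automatically factors through $\pi$.

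For the converse, I would decompose $\Lambda = P \oplus Q$ with $P$ the maximal direct summand on which $f$ vanishes; the left-approximation hypothesis then forces $\Hom_\Lambda(P, M) = 0$. By \cref{tau tilting pair 2022-08-27 08:26:25}, it suffices to show that any $\Lambda$-module $X$ with $\Hom_\Lambda(M, \tau X) = 0$, $\Hom_\Lambda(X, \tau M) = 0$, and $\Hom_\Lambda(P, X) = 0$ lies in $\add M$. Applying $\Hom_\Lambda(-, X)$ to the given sequence and invoking the Auslander--Reiten formula (which translates the two $\tau$-vanishings into $\Ext^1_\Lambda(M, X) = \Ext^1_\Lambda(X, M) = 0$), a diagram chase in the resulting long exact sequence should exhibit $f^\ast : \Hom_\Lambda(M', X) \to \Hom_\Lambda(\Lambda, X) = X$ as surjective, producing a surjection $(M')^n \twoheadrightarrow X$. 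The remaining $\Ext$-vanishing together with $\Hom_\Lambda(P, X) = 0$ should then allow one to construct a splitting and conclude $X \in \add M$.

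The main obstacle will be the splitting step: $\Ext^1_\Lambda(X, (M')^n) = 0$ only bounds the extension class of the constructed surjection inside $\Ext^1_\Lambda(X, K)$ (for $K$ its kernel), which does not visibly vanish. I anticipate two possible resolutions. Option (i) is to construct the section by hand, using the explicit form of $f$ restricted to $Q$ and the vanishing $\Hom_\Lambda(P, X) = 0$ to lift elements of $X$ through $f$ coherently. Option (ii) is to sidestep the splitting entirely by reinterpreting the data torsion-theoretically: the approximation sequence encodes functorial finiteness of the torsion class generated by $M$, after which the Adachi--Iyama--Reiten bijection between functorially finite torsion classes and support $\tau$-tilting modules yields the conclusion directly.
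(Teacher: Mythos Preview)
The paper does not give a proof of this proposition; it is quoted as \cite[Proposition~2.14]{MR3428959} and used as a black box in the proof of \cref{main group algebra 2022-06-09 01:38:53}. There is therefore no argument in the paper to compare your attempt against.

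On the substance of your sketch: the forward direction is fine. In the converse, the step you flag is a genuine gap, not just a detail. From the long exact sequence attached to $0\to \Image f\to M'\to M''\to 0$ and the vanishing $\Ext^1_\Lambda(M'',X)=0$ you only get that $\Hom_\Lambda(M',X)\to\Hom_\Lambda(\Image f,X)$ is surjective; to upgrade this to surjectivity of $f^*\colon\Hom_\Lambda(M',X)\to\Hom_\Lambda(\Lambda,X)$ you would need every map $\Lambda\to X$ to vanish on $\Kernel f$, and nothing you have written forces $\Kernel f=P$. Your Option~(i) amounts to establishing exactly this equality, which does hold for a \emph{minimal} left $\add M$-approximation but requires a separate argument using the $\tau$-rigidity of $M$; it is not a routine diagram chase. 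Your Option~(ii) is the standard and cleaner route and is how the result is obtained in the literature: the approximation sequence witnesses that the torsion class generated by $M$ is functorially finite with $\add M$ as its $\Ext$-projectives, after which the bijection of \cite{MR3187626} between functorially finite torsion classes and support $\tau$-tilting modules yields the conclusion directly. If you pursue the converse, commit to Option~(ii) rather than trying to repair the splitting argument.
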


\subsection{Modules over blocks of finite group}
Let \(G\) be a finite group and \(H\) a subgroup of \(G\).
We denote by \(\restr_{H}^G\) the restriction functor from \(kG\lmod\) to \(kH\lmod\) and \(\induc_H^G:={}_{kG}kG\otimes _{kH} \bullet\) the induction functor from \(kH\lmod\) to \(kG\lmod\).
The field \(k\) can always be regarded as a \(kG\)-module by defining \(gx=x\) for any \(g\in G\) and \(x \in k\).
This module is called the trivial module and is denoted by \(k_G\).
\begin{proposition}[{see \cite[Lemma 8.5, Lemma 8.6]{MR860771}}]\label{Theorem: Frobenius and projective}
	Let \(G\) be a finite group, \(K\) a subgroup of \(G\), \(H\) a subgroup of \(K\).
	Then the following hold:
	\begin{enumerate}
		\item \(\restr^G_H \cong \restr^
		      K_H\restr^G_K\).
		\item \(\induc^G_H \cong \induc^G_K\induc^K_H\).\label{transitive induc 2022-06-09 02:03:23}
		\item The functors \(\restr_H^G\) and \(\induc_H^G\) are left and right adjoint to each other.\label{frob adjoint 2022-06-02 15:44:10}
		\item The functors \(\restr_H^G\) and \(\induc_H^G\) send projective modules to projective modules.\label{proj to proj 2022-06-02 12:44:53}
	\end{enumerate}
\end{proposition}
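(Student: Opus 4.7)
The plan is to treat the four parts in turn, relying throughout on two basic observations: the definition \(\induc_H^G=kG\otimes_{kH}(-)\), and the fact that a choice of left coset representatives \(g_1,\dots,g_n\) with \(G=\bigsqcup_i g_iH\) makes \({}_{kG}kG\) into a finitely generated free right \(kH\)-module of rank \([G:H]\).

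Parts (1) and (2) are essentially formal. For (1) I would just unpack what it means to regard a \(kG\)-module as a \(kH\)-module along the inclusion chain \(kH\subseteq kK\subseteq kG\). For (2) I would use associativity of the tensor product together with the bimodule isomorphism \(kG\otimes_{kK}kK\cong kG\) to obtain
\begin{equation*}
	kG\otimes_{kH}(-) \cong (kG\otimes_{kK}kK)\otimes_{kH}(-) \cong kG\otimes_{kK}\bigl(kK\otimes_{kH}(-)\bigr),
\end{equation*}
naturally in the argument.

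For (3) there are two adjunctions to produce. The one with \(\induc_H^G\) left adjoint to \(\restr_H^G\) is the standard tensor--hom adjunction applied to the \((kG,kH)\)-bimodule \(kG\). The opposite adjunction, with \(\induc_H^G\) right adjoint to \(\restr_H^G\), is the main technical point: using the coset basis above, I would identify the induction functor \(kG\otimes_{kH}(-)\) with the coinduction functor \(\Hom_{kH}(kG,-)\) as functors on \(kH\lmod\); this identification then combines with the hom--restriction adjunction to produce the required adjunction.

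Finally, (4) follows from the previous material. Induction sends \(kH\) to \(\induc_H^G kH \cong kG\), which is projective, so by additivity it preserves summands of free modules, i.e.\ projectives. For restriction, the coset argument shows that \({}_{kH}(kG)\) is free of finite rank, so any projective \(kG\)-module---being a summand of some \((kG)^{\oplus r}\)---restricts to a summand of a free \(kH\)-module, which is itself projective. The main obstacle in the whole argument is the identification of induction with coinduction underlying the second half of (3); once that is in place the remaining steps are routine bookkeeping.
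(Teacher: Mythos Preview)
Your outline is correct and follows the standard textbook route. Note, however, that the paper does not supply its own proof of this proposition: it is stated with a reference to Alperin's \emph{Local representation theory} (Lemmas~8.5 and~8.6) and used as a black box. There is therefore nothing in the paper to compare your argument against beyond the cited source, whose treatment is essentially the one you sketch: transitivity from associativity of tensor and of restriction along a chain of subalgebras, Frobenius reciprocity from tensor--hom together with the identification of induction with coinduction (using that \(kG\) is free of rank \([G{:}H]\) as a right \(kH\)-module), and preservation of projectives from the explicit freeness of \(kG\) over \(kH\). One minor remark: once you have (3), part (4) also drops out abstractly, since a functor with an exact right adjoint preserves projectives and here both adjoints are exact; this is a slight shortcut over the direct computation you give, but either argument is fine.
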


Let \(G\) be a normal subgroup of a finite group \(\tilde{G}\) and \(U\) a \(kG\)-module. For \(\tilde{g} \in \tilde{G}\), we define a \(kG\)-module \(\tilde{g}U\) consisting of symbols \(\tilde{g}u\) as a set, where \(u\in U\) and its \(kG\)-module structure is given by \(\tilde{g}u+\tilde{g}u':=\tilde{g}(u+u')\), \(\lambda(\tilde{g}u):=\tilde{g}(\lambda u)\) and \(g(\tilde{g}u):=\tilde{g}(\tilde{g}^{-1}g\tilde{g}u) \) for any \(u, u'\in U\), \(\lambda\in k\) and \(g \in G\).
Let \(U\) be a \(kG\)-module.
If \(U\) is projective or indecomposable, then \(xU\) is also projective or indecomposable, respectively.

\begin{theorem}[Mackey's decomposition formula for normal subgroups]\label{Mackey's decomposition formula 2022-08-28 15:16:37}
	Let \(G\) be a normal subgroup of a finite group \(\tilde{G}\) and \(U\) a \(kG\)-module. Then we have an isomorphism
	\begin{equation}
		\restr^{\tilde{G}}_G\induc^{\tilde{G}}_G U\cong\bigoplus_{x \in [\tilde{G}/G]} x U,
	\end{equation}
	of \(kG\)-modules, where \([\tilde{G}/G]\) is a set of representatives of the factor group \(\tilde{G}/G\).
\end{theorem}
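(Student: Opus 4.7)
The plan is to compute $\induc_G^{\tilde{G}} U = k\tilde{G} \otimes_{kG} U$ explicitly via a coset decomposition. Fix a set $[\tilde{G}/G]$ of representatives so that $\tilde{G} = \bigsqcup_{x \in [\tilde{G}/G]} xG$. Then $k\tilde{G}$ is free as a right $kG$-module with basis $[\tilde{G}/G]$, and tensoring with $U$ over $kG$ gives the $k$-vector space decomposition
\[
\induc_G^{\tilde{G}} U = \bigoplus_{x \in [\tilde{G}/G]} (x \otimes U),
\]
where $x \otimes U := \{\, x \otimes u \mid u \in U \,\}$.

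Next I would verify that each summand $x \otimes U$ is stable under the left $kG$-action obtained by restriction. Using normality of $G$ in $\tilde{G}$, for $g \in G$ and $u \in U$ we have $x^{-1} g x \in G$, so
\[
g \cdot (x \otimes u) = gx \otimes u = x (x^{-1} g x) \otimes u = x \otimes (x^{-1} g x) u,
\]
which lies in $x \otimes U$. Comparing this with the defining relation $g(xu) = x((x^{-1}gx)u)$ of the twisted module $xU$, the $k$-linear map $\varphi_x \colon xU \to x \otimes U$ given by $xu \mapsto x \otimes u$ is well-defined, bijective, and $kG$-linear. Assembling the $\varphi_x$ over all $x \in [\tilde{G}/G]$ yields the desired isomorphism of $kG$-modules.

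There is no serious obstacle: the content of the argument is purely bookkeeping between two $kG$-actions, namely the one by left multiplication in $k\tilde{G}$ after restriction and the twisted action defining $xU$, and normality of $G$ is precisely what forces these to agree via the conjugation $g \mapsto x^{-1} g x$. The only point worth highlighting is that the isomorphism depends on the choice of transversal $[\tilde{G}/G]$, but different choices produce isomorphic summands since replacing $x$ by $xg_0$ with $g_0 \in G$ gives $(xg_0)U \cong xU$ as $kG$-modules.
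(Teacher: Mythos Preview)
Your proof is correct and is the standard direct argument. The paper does not actually supply a proof of this theorem; it is stated as a known background result (a special case of Mackey's decomposition formula) and used freely thereafter. So there is nothing to compare against, but what you have written is exactly the computation one would give if asked to justify the statement from scratch.
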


We recall the definition of blocks of group algebras. Let \(G\) be a finite group. The group algebra \(kG\) has a unique decomposition
\begin{equation}\label{block dec 2021-11-10 11:00:13}
	kG=B_0\times \cdots \times B_l
\end{equation}
into the direct product of indecomposable \(k\)-algebras \(B_i\).
We call each indecomposable direct product component \(B_i\) a block of \(kG\) and the decomposition above the block decomposition. We remark that any block \(B_i\) is a two-sided ideal of \(kG\).

For any indecomposable \(kG\)-module \(U\), there exists a unique block \(B_i\) of \(kG\) such that \(U=B_iU\) and \(B_jU=0\) for all \(j\neq i\). Then we say that \(U\) lies in the block \(B_i\) or simply \(U\) is a \(B_i\)-module.
We denote by \(B_0(kG)\) the principal block of \(kG\), in which the trivial \(kG\)-module \(k_G\) lies.

Let \(G\) be a normal subgroup of a finite group \(\tilde{G}\), \(B\) a block of \(kG\) and \(\tilde{B}\) a block of \(k\tilde{G}\).
We say that \(\tilde{B}\) covers \(B\) (or that \(B\) is covered by \(\tilde{B}\)) if \(1_B 1_{\tilde{B}}\neq 0\).

\begin{remark}[{see \cite[Theorem 15.1, Lemma 15.3]{MR860771}}]\label{Remark:cover}
	With the notation above, the following are equivalent:
	\begin{enumerate}
		\item The block \(\tilde{B}\) covers \(B\).
		\item There exists a non-zero \(\tilde{B}\)-module \(U\) such that \(\restr_G^{\tilde{G}} U\) has a non-zero direct summand lying in \(B\).
		\item For any non-zero \(\tilde{B}\)-module \(U\), there exists a non-zero direct summand of \(\restr_G^{\tilde{G}} U\) lying in \(B\).
	\end{enumerate}
\end{remark}

We denote by \(\inertiagp_{\tilde{G}}(B)\) the  inertial group of \(B\) in \(\tilde{G}\), that is \(\inertiagp_{\tilde{G}}(B):=\left\{ x \in \tilde{G} \setmid xBx^{-1} = B \right\}\).

\begin{remark}
	The principal block \(B_0(kG)\) of \(kG\) is covered by the principal block \(B_0(k\tilde{G})\) of \(k\tilde{G}\) and \(\inertiagp_{\tilde{G}}(B_0(kG))=\tilde{G}\).
\end{remark}
\begin{remark}
	\label{remark another block 2022-08-30 16:09:09}
	Let \(G\) be a normal subgroup of a finite group \(\tilde{G}\), \(B\) a block of \(kG\) and \(M\) a \(B\)-module.
	Then \(xM\) is a \(B\)-module for \(x\in \tilde{G}\) if and only if \(x\in \inertiagp_{\tilde{G}}(B)\).
\end{remark}
\begin{proposition}[{see \cite[Theorem 5.5.10, Theorem 5.5.12]{MR998775}}]\label{Morita equivalence covering block}
	Let \(G\) be a normal subgroup of a finite group \(\tilde{G}\), \(B\) a block of \(kG\) and \(\beta\) a block of \(k\inertiagp_{\tilde{G}}(B)\) covering \(B\).
	Then the following hold:
	\begin{enumerate}
		\item For any \(B\)-module \(V\), the induced module \(\induc_G^{\inertiagp_{\tilde{G}}(B)}V\) is a direct sum of \(k\inertiagp_{\tilde{G}}(B)\)-module lying blocks covering \(B\).\label{Morita item direct sum 2022-06-09 01:49:36}
		\item There exists a unique block \(\tilde{B}\) of \(k\tilde{G}\) covering \(B\) such that the induction functor
		      \begin{equation}
			      \induc_{\inertiagp_{\tilde{G}}(B)}^{\tilde{G}} \colon k\inertiagp_{\tilde{G}}(B)\lmod \rightarrow k{\tilde{G}}\lmod
		      \end{equation}
		      restricts to a Morita equivalence
		      \begin{equation}\label{2020-03-25 15:17:09}
			      \begin{tikzcd}
				      \induc_{\inertiagp_{\tilde{G}}(B)}^{\tilde{G}} \colon \beta\lmod \ar[r]&  \tilde{B}\lmod.
			      \end{tikzcd}
		      \end{equation}
		      and the mapping \(\beta\) to \(\tilde{B}\) is a bijection between the set of blocks of \(k\inertiagp_{\tilde{G}(B)}\) covering \(B\) and the one of \(k\tilde{G}\) covering \(B\).\label{Morita item 2022-06-09 01:35:47}
	\end{enumerate}
\end{proposition}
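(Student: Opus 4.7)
The plan is to prove (1) via Mackey's decomposition formula and the characterization of block covering, and then to prove (2) by using (1) together with a Clifford-theoretic analysis of conjugate idempotents to identify the Morita equivalence.

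For (1), let $V$ be a $B$-module and set $I := \inertiagp_{\tilde G}(B)$ for short. By \cref{Mackey's decomposition formula 2022-08-28 15:16:37}, $\restr_G^I \induc_G^I V \cong \bigoplus_{x \in [I/G]} xV$. For each $x \in I$ one has $xBx^{-1} = B$, so $xV$ is again a $B$-module by \cref{remark another block 2022-08-30 16:09:09}; hence the entire restriction lies in $B$. Writing $\induc_G^I V = \bigoplus_{\beta'} 1_{\beta'} \induc_G^I V$ with $\beta'$ running over blocks of $kI$, every nonzero summand has $G$-restriction lying in $B$, so by \cref{Remark:cover} the corresponding block $\beta'$ covers $B$.

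For (2), I would first verify full faithfulness of $\induc_I^{\tilde G}$ on $\beta\lmod$. For $M, N \in \beta\lmod$, Frobenius reciprocity (\cref{frob adjoint 2022-06-02 15:44:10}) together with the general Mackey decomposition for the inclusion $I \leq \tilde G$ expresses $\Hom_{k\tilde G}(\induc_I^{\tilde G} M, \induc_I^{\tilde G} N)$ as a direct sum over $[I \backslash \tilde G / I]$. The trivial double-coset summand equals $\Hom_{kI}(M, N)$. For $g \notin I$, the corresponding summand, restricted to $G$, becomes a module over the conjugate block $gBg^{-1} \neq B$, so the summand itself lies in a block of $kI$ covering $gBg^{-1}$, orthogonal to $\beta$; hence its contribution vanishes. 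This yields $\Hom_{k\tilde G}(\induc_I^{\tilde G} M, \induc_I^{\tilde G} N) \cong \Hom_{kI}(M, N)$, proving full faithfulness.

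Next, I would identify the essential image with a single block. By part (1) and transitivity $\induc_G^{\tilde G} \cong \induc_I^{\tilde G} \induc_G^I$ (\cref{transitive induc 2022-06-09 02:03:23}), for any simple $\beta$-module $S$ the $G$-restriction of $\induc_I^{\tilde G} S$ contains $B$-summands, so by \cref{Remark:cover} the block of $k\tilde G$ containing $\induc_I^{\tilde G} S$ covers $B$. A standard orthogonality and summation identity for the conjugate idempotents $\{g\, 1_\beta\, g^{-1}\}_{g \in [\tilde G / \inertiagp_{\tilde G}(\beta)]}$ in $k\tilde G$ then shows they combine into a single central idempotent $1_{\tilde B}$, so that all $\induc_I^{\tilde G} S$ lie in one block $\tilde B$. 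Full faithfulness plus exactness of $\induc_I^{\tilde G}$ then yields the Morita equivalence $\beta\lmod \simeq \tilde B\lmod$, and the bijection $\beta \leftrightarrow \tilde B$ follows from the $\tilde G$-orbit decomposition of the set of blocks of $kI$ covering $B$. The main obstacle is this idempotent-orthogonality computation, which is the classical Clifford-theoretic input pinning down the essential image and establishing the bijection on block sets.
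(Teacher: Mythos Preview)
The paper does not give its own proof of this proposition: it is quoted as a known result with the citation ``see \cite[Theorem 5.5.10, Theorem 5.5.12]{MR998775}'' and no argument follows. There is therefore nothing in the paper to compare your proposal against.

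Assessing your proposal on its own merits: part (1) is correct and is the standard argument. For part (2), your full-faithfulness computation via Frobenius reciprocity and the Mackey decomposition over $I\backslash\tilde G/I$ is also correct, once one checks (as you implicitly use) that every indecomposable summand of the restriction to $G$ of a $\beta$-module lies in $B$, so that the non-trivial double-coset terms restrict to blocks in the $I$-orbit of $gBg^{-1}\neq B$ and hence have zero $\beta$-component.

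The genuinely incomplete step is the identification of the essential image with a single block $\tilde B$. Your appeal to ``the conjugate idempotents $\{g\,1_\beta\,g^{-1}\}_{g\in[\tilde G/\inertiagp_{\tilde G}(\beta)]}$'' is problematic: for $g\notin N_{\tilde G}(I)$ the element $g\,1_\beta\,g^{-1}$ lies in $k(gIg^{-1})$, not in $kI$, so ``$\inertiagp_{\tilde G}(\beta)$'' is not well defined in the way you use it, and the asserted orthogonality is not immediate. A cleaner way to close this gap, using what you have already proved, is: full faithfulness gives $\End_{k\tilde G}\bigl(\induc_I^{\tilde G}\beta\bigr)\cong\End_{kI}(\beta)\cong\beta^{\opo}$, and $\induc_I^{\tilde G}\beta\cong k\tilde G\cdot 1_\beta$ is projective over $k\tilde G$. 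Since $\beta$ is a block, $\beta^{\opo}$ has no non-trivial central idempotents, so $k\tilde G\cdot 1_\beta$ lies in a single block $\tilde B$ and is a progenerator there; this yields the Morita equivalence $\beta\lmod\simeq\tilde B\lmod$ directly. The bijection on covering blocks then follows because distinct blocks $\beta,\beta'$ of $kI$ covering $B$ have $\Hom_{kI}(\beta,\beta')=0$, hence $\Hom_{k\tilde G}(\induc_I^{\tilde G}\beta,\induc_I^{\tilde G}\beta')=0$ by full faithfulness, forcing the corresponding $\tilde B$'s to be distinct.
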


\section{The main results and their applications}\label{section main 2022-07-13 09:12:20}
In this section, we give some lemmas and our main theorem.
After that, we give some applications and examples of the main results.
\subsection{Main theorems and their proof}
The next lemma plays a key role.
\begin{lemma}\label{commutative lemma 2022-06-02 15:41:42}
	Let \(G\) be a normal subgroup of a finite group \(\tilde{G}\) and \(M\) a \(kG\)-module satisfying \(xM \cong M\) as \(kG\)-modules for any \(x\in \tilde{G}\).
	Then the following hold:
	\begin{enumerate}
		\item \(xP(M) \cong P(M)\) for any \(x\in \tilde{G}\).\label{lem proj-covers 2022-06-02 12:35:13}
		\item \(x\Omega(M) \cong \Omega(M)\) for any \(x\in \tilde{G}\).\label{lem omega 2022-06-02 12:35:13}
		\item \(\induc_G^{\tilde{G}} \Omega(M)\cong \Omega(\induc_G^{\tilde{G}} M)\).\label{lem omega ind 2022-06-02 12:37:01}
		\item \(\tau(\induc_G^{\tilde{G}} M)\cong \induc_G^{\tilde{G}} \tau M\).\label{2020-03-25 01:17:54}
	\end{enumerate}
\end{lemma}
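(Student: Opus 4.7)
The plan is to handle parts (1)--(4) in order; each follows from standard properties of projective covers, syzygies, and the Nakayama functor once the right setup is in place.

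For (1), observe that \(x(\cdot)\colon kG\lmod\to kG\lmod\) is an exact \(k\)-linear self-equivalence with inverse \(x^{-1}(\cdot)\), and it preserves projective modules since \(x(kG)\cong kG\). Applying it to the projective cover \(P(M)\twoheadrightarrow M\) yields an essential surjection \(xP(M)\twoheadrightarrow xM\cong M\) with \(xP(M)\) projective, whence \(xP(M)\cong P(M)\) by uniqueness of projective covers. For (2), apply the same exact functor to the syzygy sequence \(0\to\Omega(M)\to P(M)\to M\to 0\): the resulting sequence is, via (1) and the given isomorphism \(xM\cong M\), another projective cover sequence for \(M\), so \(x\Omega(M)\cong\Omega(M)\).

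For (3), apply the exact induction functor to the syzygy sequence to obtain \(0\to\induc_G^{\tilde{G}}\Omega(M)\to\induc_G^{\tilde{G}}P(M)\to\induc_G^{\tilde{G}}M\to 0\), with \(\induc_G^{\tilde{G}}P(M)\) projective by \cref{Theorem: Frobenius and projective}. The key step is showing this surjection is essential, i.e.\ \(\induc_G^{\tilde{G}}\Omega(M)\subset\Rad(\induc_G^{\tilde{G}}P(M))\). Since \(\Omega(M)\subset\Rad(kG)\cdot P(M)\), a generator of \(\induc_G^{\tilde{G}}\Omega(M)\) has the form \(x\otimes jp=(xjx^{-1})\cdot(x\otimes p)\) with \(j\in\Rad(kG)\), and \(xjx^{-1}\in\Rad(kG)\) by normality of \(G\). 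It then suffices to note that \(\Rad(kG)\cdot k\tilde{G}=k\tilde{G}\cdot\Rad(kG)\) is a nilpotent two-sided ideal of \(k\tilde{G}\), hence contained in \(\Rad(k\tilde{G})\). This gives \(\induc_G^{\tilde{G}}P(M)\cong P(\induc_G^{\tilde{G}}M)\) and \(\induc_G^{\tilde{G}}\Omega(M)\cong\Omega(\induc_G^{\tilde{G}}M)\).

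For (4), use that \(kG\) and \(k\tilde{G}\) are symmetric algebras, so the Nakayama functor acts as the identity on modules and \(\tau N\cong\Omega^2 N\), with \(\Omega^2\) computed from a minimal projective presentation. Starting from the minimal projective presentation \(P_1\to P_0\to M\to 0\) of \(M\) and applying \(\induc_G^{\tilde{G}}\), the argument of (3) applied to \(M\) and to \(\Omega(M)\) (the latter \(\tilde{G}\)-invariant by (2)) shows that \(\induc_G^{\tilde{G}}P_1\to\induc_G^{\tilde{G}}P_0\to\induc_G^{\tilde{G}}M\to 0\) is again a minimal projective presentation. Exactness of induction then yields \(\tau(\induc_G^{\tilde{G}}M)\cong\Omega^2(\induc_G^{\tilde{G}}M)\cong\induc_G^{\tilde{G}}\Omega^2(M)\cong\induc_G^{\tilde{G}}\tau M\). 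The main obstacle is (3): one must pin down enough control over \(\Rad(k\tilde{G})\) in terms of \(\Rad(kG)\) to conclude that induction preserves projective covers in this normal-subgroup setting, after which (4) reduces to formal manipulations of the minimal projective presentation of \(\induc_G^{\tilde{G}}M\).
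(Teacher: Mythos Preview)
Your proof is correct. Parts (1), (2) and (4) match the paper's argument essentially verbatim: use that \(x(\cdot)\) is an exact equivalence to transport projective covers, and invoke \(\tau\cong\Omega^2\) for symmetric algebras together with (2) and (3).

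For part (3) you take a genuinely different route. The paper argues indirectly: it compares \(\induc_G^{\tilde G}P(M)\twoheadrightarrow\induc_G^{\tilde G}M\) with the actual projective cover \(P(\induc_G^{\tilde G}M)\twoheadrightarrow\induc_G^{\tilde G}M\), obtains a projective complement \(Q\) with \(\induc_G^{\tilde G}\Omega(M)\cong\Omega(\induc_G^{\tilde G}M)\oplus Q\), then restricts back to \(G\), applies Mackey and the invariance hypothesis (via part (2)) to get \(\Omega(M)^{\oplus|\tilde G:G|}\cong \restr_G^{\tilde G}\Omega(\induc_G^{\tilde G}M)\oplus\restr_G^{\tilde G}Q\), and concludes \(Q=0\) because \(\Omega(M)\) has no projective summands (self-injectivity of \(kG\)). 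Your argument is more direct: you show essentiality outright by proving \(\induc_G^{\tilde G}\Omega(M)\subset\Rad(k\tilde G)\cdot\induc_G^{\tilde G}P(M)\), using that \(k\tilde G\cdot\Rad(kG)=\Rad(kG)\cdot k\tilde G\) is a nilpotent two-sided ideal of \(k\tilde G\) (normality of \(G\) makes \(\Rad(kG)\) conjugation-stable). This is shorter, avoids the self-injectivity input, and in fact establishes (3) without ever using the \(\tilde G\)-invariance of \(M\); the paper's approach, by contrast, genuinely consumes invariance through the Mackey step. The trade-off is that the paper's proof stays at the level of module isomorphisms and standard functorial facts, while yours requires the element-level computation and the observation about \(\Rad(kG)k\tilde G\), which is elementary but perhaps less familiar.
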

\begin{proof}
	For any \(x\in \tilde{G}\), we have an isomorphism \(\phi \colon xM\rightarrow M\) by the assumption.
	We consider the following commutative diagram in \(kG\lmod\) with exact rows:
	\begin{equation}
		\begin{tikzcd}
			0\ar[r]&x\Omega (M) \ar[r] \ar[d,"\phi''"]& xP (M) \ar[r,"{}^{x}\pi_M"]\ar[d,"\phi'"]& xM\ar[r] \ar[d,"\phi"] & 0\\
			0\ar[r]&\Omega(M) \ar[r] & P(M) \ar[r,"\pi_M"'] &  M\ar[r] &0.
		\end{tikzcd}
	\end{equation}
	Since \(\pi_M\) is an essential epimorphism and \(\phi\) is an isomorphism, the vertical morphisms \(\phi'\) and \(\phi''\) are isomorphisms and so \ref{lem proj-covers 2022-06-02 12:35:13} and \ref{lem omega 2022-06-02 12:35:13} holds.

	By \cref{Theorem: Frobenius and projective} \ref{proj to proj 2022-06-02 12:44:53}, we have the following commutative diagram in \(k\tilde{G}\lmod\) with exact rows:
	\begin{equation}
		\begin{tikzcd}[column sep=35pt]
			0\ar[r]&\induc_G^{\tilde{G}}\Omega(M) \ar[r] \ar[d,"\varphi'"]& \induc_G^{\tilde{G}}P(M) \ar[r,"\induc_G^{\tilde{G}}\pi_M"] \ar[d,"\varphi"]& \induc_G^{\tilde{G}} M\ar[r] \ar[d,"\Id_{\induc_G^{\tilde{G}} M}"]&0\\
			0\ar[r]&\Omega (\induc_G^{\tilde{G}} M) \ar[r]& P(\induc_G^{\tilde{G}} M) \ar[r,"\pi_{\induc_G^{\tilde{G}} M}"']&\induc_G^{\tilde{G}} M\ar[r]&0.
		\end{tikzcd}
	\end{equation}
	Since \(\pi_{\induc_G^{\tilde{G}} M}\) is an essential epimorphism, we have that the vertical morphisms \(\varphi\) and \(\varphi'\) are split epimorphisms, \(\Kernel \varphi \cong \Kernel \varphi'\) are projective \(k\tilde{G}\)-modules and that \(\Omega (\induc_G^{\tilde{G}} M) \oplus \Kernel \varphi'\cong \induc_G^{\tilde{G}}\Omega(M)\).
	By \cref{Mackey's decomposition formula 2022-08-28 15:16:37} and \ref{lem omega 2022-06-02 12:35:13}, we have
	\begin{align}
		\Omega(M)^{\oplus|\tilde{G}:G|}
		 & \cong\bigoplus_{x\in[\tilde{G}/G]}x\Omega(M)                                                            \\
		 & \cong \restr_G^{\tilde{G}}\induc_G^{\tilde{G}}\Omega(M)                                                 \\
		 & \cong \restr_G^{\tilde{G}}\Omega (\induc_G^{\tilde{G}} M) \oplus \restr_G^{\tilde{G}} \Kernel \varphi'.
	\end{align}
	Since \(\restr_G^{\tilde{G}} \Kernel \varphi'\) is projective by \cref{Theorem: Frobenius and projective} \ref{proj to proj 2022-06-02 12:44:53} and \(\Omega(M)\) has no non-zero projective summands by the self-injectivity of the group algebra \(kG\),
	we have that \(\Kernel \varphi\cong \Kernel \varphi' =0\).
	This finishes the proof of \ref{lem omega ind 2022-06-02 12:37:01}.

	Finally, we prove the assertion \ref{2020-03-25 01:17:54}.
	Since \(k\tilde{G}\) and \(kG\) are symmetric \(k\)-algebras, it holds that \(\tau M\cong \Omega \Omega(M)\) and \(\tau(\induc_G^{\tilde{G}} M)\cong \Omega \Omega (\induc_G^{\tilde{G}} M)\) for any \(kG\)-module \(M\).
	Therefore, \ref{2020-03-25 01:17:54} immediately follows from \ref{lem omega ind 2022-06-02 12:37:01}.
\end{proof}
\begin{theorem}\label{main group algebra 2022-06-09 01:38:53}
	Let \(G\) be a normal subgroup of a finite group \(\tilde{G}\) and \(M\) a support \(\tau\)-tilting \(kG\)-module satisfying \(xM\cong M\) as \(kG\)-modules for any \(x \in \tilde{G}\).
	Then the induced module \(\induc_G^{\tilde{G}}M\) of \(M\) is a support \(\tau\)-tilting \(k\tilde{G}\)-module.
\end{theorem}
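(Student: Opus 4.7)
The plan is to apply \cref{lem app 2022-06-02 16:07:34} to $\induc_G^{\tilde{G}} M$, which reduces the problem to two checks: $\tau$-rigidity of $\induc_G^{\tilde{G}} M$, and the existence of an exact sequence $k\tilde{G} \to N_1 \to N_2 \to 0$ with $N_1, N_2 \in \add \induc_G^{\tilde{G}} M$ whose first map is a left $\add \induc_G^{\tilde{G}} M$-approximation of $k\tilde{G}$.

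For the $\tau$-rigidity, I would chain the natural isomorphisms
\[
\Hom_{k\tilde{G}}(\induc_G^{\tilde{G}} M, \tau \induc_G^{\tilde{G}} M) \cong \Hom_{k\tilde{G}}(\induc_G^{\tilde{G}} M, \induc_G^{\tilde{G}} \tau M) \cong \bigoplus_{x \in [\tilde{G}/G]} \Hom_{kG}(M, x \tau M),
\]
using \cref{commutative lemma 2022-06-02 15:41:42} for the first step and Frobenius reciprocity (\cref{Theorem: Frobenius and projective}) together with Mackey's decomposition (\cref{Mackey's decomposition formula 2022-08-28 15:16:37}) for the second. The invariance $xM \cong M$ combined with \cref{commutative lemma 2022-06-02 15:41:42} (applied iteratively, using that $kG$ is symmetric so $\tau M \cong \Omega^2 M$) forces $x \tau M \cong \tau M$, so every summand vanishes by $\tau$-rigidity of $M$.

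For the approximation sequence, I would start from \cref{lem app 2022-06-02 16:07:34} applied to $M$: there is an exact sequence $kG \xrightarrow{f} M_1 \xrightarrow{f'} M_2 \to 0$ with $M_1, M_2 \in \add M$ and $f$ a left $\add M$-approximation of $kG$. Since $\induc_G^{\tilde{G}}$ is exact and $\induc_G^{\tilde{G}}(kG) \cong k\tilde{G}$, applying it produces an exact sequence
\[
k\tilde{G} \xrightarrow{\induc_G^{\tilde{G}} f} \induc_G^{\tilde{G}} M_1 \xrightarrow{\induc_G^{\tilde{G}} f'} \induc_G^{\tilde{G}} M_2 \to 0
\]
with the two right-hand terms in $\add \induc_G^{\tilde{G}} M$. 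To verify that $\induc_G^{\tilde{G}} f$ is a left $\add \induc_G^{\tilde{G}} M$-approximation, take $Y \in \add \induc_G^{\tilde{G}} M$ and a morphism $g \colon k\tilde{G} \to Y$. By Mackey and the invariance hypothesis, $\restr_G^{\tilde{G}} \induc_G^{\tilde{G}} M \cong M^{\oplus |\tilde{G}:G|}$, so $\restr_G^{\tilde{G}} Y \in \add M$. The Frobenius adjoint $\tilde{g} \colon kG \to \restr_G^{\tilde{G}} Y$ of $g$ then factors as $\tilde{g} = \psi \circ f$ for some $\psi \colon M_1 \to \restr_G^{\tilde{G}} Y$, and the morphism $h \colon \induc_G^{\tilde{G}} M_1 \to Y$ adjoint to $\psi$ satisfies $g = h \circ \induc_G^{\tilde{G}} f$ by naturality of the adjunction.

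The place where the invariance hypothesis $xM \cong M$ is genuinely used, and the main point to get right, is the identification $\restr_G^{\tilde{G}} Y \in \add M$: without it, the summands $xM$ in Mackey's decomposition would not all lie in $\add M$, and the transport of the $\add M$-approximation through the Frobenius adjunction (as well as the vanishing of $\Hom_{kG}(M, x\tau M)$ in the $\tau$-rigidity step) would both fail.
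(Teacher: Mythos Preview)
Your proof is correct and follows the same overall strategy as the paper: establish $\tau$-rigidity via \cref{commutative lemma 2022-06-02 15:41:42}, Frobenius reciprocity and Mackey, then transport the approximation sequence from \cref{lem app 2022-06-02 16:07:34} through the induction functor. Your handling of the approximation step is in fact slightly slicker than the paper's: rather than first treating $X=\induc_G^{\tilde G}M$, then direct sums, then arbitrary summands via a retraction, you observe directly that $\restr_G^{\tilde G}Y\in\add M$ for any $Y\in\add\induc_G^{\tilde G}M$ and lift the factorisation through the adjunction in one stroke.
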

\begin{proof}
	The similar proof of \cite[Theorem 4.2]{MR3531045} works in this setting.
	By \cref{commutative lemma 2022-06-02 15:41:42} \ref{lem omega ind 2022-06-02 12:37:01}, \cref{Theorem: Frobenius and projective} \ref{frob adjoint 2022-06-02 15:44:10}, \cref{Mackey's decomposition formula 2022-08-28 15:16:37}, the \(\inertiagp_{\tilde{G}}(B)\)-invariance of   and the \(\tau\)-rigidity of \(M\), we have the following:
	\begin{align}
		\Hom_{k\tilde{G}}(\induc_G^{\tilde{G}}M,\tau\induc_G^{\tilde{G}}M)
		 & \cong \Hom_{k\tilde{G}}(\induc_G^{\tilde{G}}M,\induc_G^{\tilde{G}}\tau M) \\
		 & \cong \Hom_{kG}(\restr_G^{\tilde{G}}\induc_G^{\tilde{G}}M,\tau M)         \\
		 & \cong \Hom_{kG}(\bigoplus_{x\in[\tilde{G}/G]}xM,\tau M)                   \\
		 & \cong \bigoplus_{x\in[\tilde{G}/G]} \Hom_{kG}(M,\tau M)                   \\
		 & = 0.
	\end{align}
	Therefore, we have that \(\induc_G^{\tilde{G}}M\) is \(\tau\)-rigid.
	By \cref{lem app 2022-06-02 16:07:34}, there exists an exact sequence
	\begin{equation}\label{ex app 2022-06-06 16:38:32}
		\begin{tikzcd}
			kG\ar[r,"f"]&M'\ar[r,"f'"]&M''\ar[r]&0
		\end{tikzcd}
	\end{equation}
	with \(M', M''\in \add M\) and \(f\) a left \(\add M\)-approximation of \(kG\).
	Applying the functor \(\induc_G^{\tilde{G}}\) to the exact sequence \eqref{ex app 2022-06-06 16:38:32}, we get the exact sequence
	\begin{equation}
		\begin{tikzcd}
			k\tilde{G}\cong \induc_G^{\tilde{G}}kG\ar[r,"\induc_G^{\tilde{G}}f"]&\induc_G^{\tilde{G}}M'\ar[r,"\induc_G^{\tilde{G}}f'"]&\induc_G^{\tilde{G}}M''\ar[r]&0
		\end{tikzcd}
	\end{equation}
	satisfying that \(\induc_G^{\tilde{G}}M',\induc_G^{\tilde{G}}M''\in\add \induc_G^{\tilde{G}}M\).
	Then by \cref{lem app 2022-06-02 16:07:34}, we only have to prove that \(\induc_G^{\tilde{G}}f\) is a left \(\add \induc_G^{\tilde{G}} M\)-approximation of \(k\tilde{G}\), that is, the map
	\begin{equation}\label{morphism any 2022-06-08 19:11:02}
		\begin{tikzcd}[column sep=35pt]
			\Hom_{k\tilde{G}}(\induc_G^{\tilde{G}}M',X)\ar[r,"\bullet \circ \induc_G^{\tilde{G}}f"]&\Hom_{k\tilde{G}}(k\tilde{G},X)
		\end{tikzcd}
	\end{equation}
	is surjective for any \(X\in \add \induc_G^{\tilde{G}}M\).
	First we prove that the map
	\begin{equation}\label{morphism 1 2022-06-08 19:11:36}
		\begin{tikzcd}[column sep=35pt]
			\Hom_{k\tilde{G}}(\induc_G^{\tilde{G}}M',\induc_G^{\tilde{G}}M)\ar[r,"\bullet \circ \induc_G^{\tilde{G}}f"]&\Hom_{k\tilde{G}}(k\tilde{G},\induc_G^{\tilde{G}}M)
		\end{tikzcd}
	\end{equation}
	is surjective.
	By \cref{Theorem: Frobenius and projective} \ref{frob adjoint 2022-06-02 15:44:10}, \cref{Mackey's decomposition formula 2022-08-28 15:16:37} and the assumption, we get the following commutative diagram:
	\begin{equation}
		\begin{tikzcd}[column sep=35pt]
			\Hom_{k\tilde{G}}(\induc_G^{\tilde{G}}M',\induc_G^{\tilde{G}}M)\ar[r,"\bullet \circ \induc_G^{\tilde{G}}f"]\ar[d,"\sim"',sloped]&\Hom_{k\tilde{G}}(k\tilde{G},\induc_G^{\tilde{G}}M)\ar[d,"\sim",sloped]\\
			\Hom_{kG}(M',\restr_G^{\tilde{G}}\induc_G^{\tilde{G}}M)\ar[r,"\bullet \circ f"]\ar[d,"\sim"',sloped]&\Hom_{kG}(kG,\restr_G^{\tilde{G}}\induc_G^{\tilde{G}}M)\ar[d,"\sim",sloped]\\
			\Hom_{kG}(M',\bigoplus_{x\in[\tilde{G}/G]}xM)\ar[r,"\bullet \circ f"]\ar[d,"\sim"',sloped]&\Hom_{kG}(kG,\bigoplus_{x\in[\tilde{G}/G]}xM)\ar[d,"\sim",sloped]\\
			\Hom_{kG}(M',M^{\oplus|\tilde{G}:G|})\ar[r,"\bullet \circ f"]&\Hom_{kG}(kG,M^{\oplus|\tilde{G}:G|}).
		\end{tikzcd}
	\end{equation}
	The map in the last row is surjective since \(f\) is left \(\add M\)-approximation of \(kG\),	which implies that the map in the first row, which is the map \eqref{morphism 1 2022-06-08 19:11:36}, is surjective.
	Hence, we get that
	\begin{equation}\label{morhism m direct 2022-06-08 19:10:37}
		\begin{tikzcd}[column sep=35pt]
			\Hom_{k\tilde{G}}(\induc_G^{\tilde{G}}M',\induc_G^{\tilde{G}}M^{\oplus m})\ar[r,"\bullet \circ \induc_G^{\tilde{G}}f"]&\Hom_{k\tilde{G}}(k\tilde{G},\induc_G^{\tilde{G}}M^{\oplus m})
		\end{tikzcd}
	\end{equation}
	is surjective for any \(m\in \mathbb{N}\).
	Now take \(X\in \add \induc_G^{\tilde{G}} M\) and \(h\in \Hom_{k\tilde{G}}(k\tilde{G},X)\) arbitrarily.
	Then there exists \(m\in \mathbb{N}\) and a split exact sequence
	\begin{equation}
		\begin{tikzcd}
			0\ar[r]&X\ar[r,"\alpha"]&\induc_G^{\tilde{G}}M^{\oplus m}\ar[r,"\beta"]&Y\ar[r]&0
		\end{tikzcd}
	\end{equation}
	in \(k\tilde{G}\lmod\). Let \(\gamma \colon \induc_G^{\tilde{G}}M^{\oplus m}\rightarrow X\) be a retraction of \(\alpha\), that is, a \(k\tilde{G}\)-homomorphism satisfying \(\gamma \circ \alpha=\Id_X\).
	Since the map \eqref{morhism m direct 2022-06-08 19:10:37} is surjective and \(\alpha\circ h \in \Hom_{k\tilde{G}}(k\tilde{G},\induc_G^{\tilde{G}}M^{\oplus m})\), there exists \(h'\in \Hom_{k\tilde{G}}(\induc_G^{\tilde{G}}M',\induc_G^{\tilde{G}}M^{\oplus m})\) such that \(h'\circ \induc_G^{\tilde{G}}f=\alpha\circ h\).
	Hence, we have that
	\begin{equation}
		h=\Id_X\circ h=\gamma \circ \alpha \circ h =\gamma \circ h'\circ \induc_G^{\tilde{G}}f.
	\end{equation}
	Therefore, the map \eqref{morphism any 2022-06-08 19:11:02} is surjective.
\end{proof}
The following result makes the assumption in \cref{main group algebra 2022-06-09 01:38:53} weaker in case where that the module \(M\) lies in \(B\) not only \(kG\)-module.
\begin{theorem}\label{main block 2022-07-13 08:57:19}
	Let \(G\) be a normal subgroup of a finite group \(\tilde{G}\), \(B\) a block of \(kG\), \(\tilde{B}\) a block of \(k\tilde{G}\) covering \(B\) and \(M\) a support \(\tau\)-tilting \(B\)-module satisfying \(xM\cong M\) as \(B\)-modules for any \(x \in \inertiagp_{\tilde{G}}(B)\).
	Then \(\induc_G^{\tilde{G}}M\) is a support \(\tau\)-tilting \(k\tilde{G}\)-module.
	In particular, \(\tilde{B}\induc_G^{\tilde{G}}M\) is a support \(\tau\)-tilting \(\tilde{B}\)-module.
\end{theorem}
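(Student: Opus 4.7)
The strategy is an induction in stages via the intermediate group $I := \inertiagp_{\tilde{G}}(B)$, combined with the Morita equivalence of \cref{Morita equivalence covering block}. Since $G$ is normal in $I$ and the hypothesis supplies $xM \cong M$ as $kG$-modules for every $x \in I$, I first apply \cref{main group algebra 2022-06-09 01:38:53} to the pair $G \subseteq I$. This application requires $M$ to be a support $\tau$-tilting $kG$-module (not merely over $B$), which follows by enlarging the defining idempotent $e \in B$ with the identities of the remaining blocks of $kG$: the resulting idempotent $e' \in kG$ satisfies $kG/(kG)e'(kG) \cong B/BeB$. Hence $\induc_G^I M$ is a support $\tau$-tilting $kI$-module.

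Next, by \cref{Morita equivalence covering block} \ref{Morita item direct sum 2022-06-09 01:49:36}, $\induc_G^I M$ is annihilated by every block of $kI$ not covering $B$, so it decomposes as $\induc_G^I M = \bigoplus_{\beta} \beta \induc_G^I M$, where $\beta$ runs over the blocks of $kI$ covering $B$. The block decomposition of $kI$ as a direct product, combined with the numerical criterion of \cref{tau number remark 2021-09-07 12:11:53}, forces each $\beta \induc_G^I M$ to be a support $\tau$-tilting $\beta$-module: if $P$ is the projective complement for $\induc_G^I M$ over $kI$, then $\beta P$ serves as one for $\beta \induc_G^I M$ over $\beta$ (the components of $P$ sitting in blocks that do not cover $B$ must exhaust those entire blocks, absorbing all the missing simples). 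By \cref{Morita equivalence covering block} \ref{Morita item 2022-06-09 01:35:47}, the functor $\induc_I^{\tilde{G}}$ restricts to a Morita equivalence $\beta\lmod \to \tilde{B}_\beta\lmod$ onto a block $\tilde{B}_\beta$ of $k\tilde{G}$ covering $B$; since Morita equivalences preserve support $\tau$-tilting modules, each $\induc_I^{\tilde{G}}(\beta \induc_G^I M)$ is a support $\tau$-tilting $\tilde{B}_\beta$-module.

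Transitivity of induction (\cref{Theorem: Frobenius and projective} \ref{transitive induc 2022-06-09 02:03:23}) now yields $\induc_G^{\tilde{G}} M \cong \induc_I^{\tilde{G}} \induc_G^I M \cong \bigoplus_{\beta} \induc_I^{\tilde{G}}(\beta \induc_G^I M)$, which is a direct sum of support $\tau$-tilting modules over distinct blocks of $k\tilde{G}$ covering $B$. Running the bookkeeping in reverse --- padding the projective complement with the identities of the blocks of $k\tilde{G}$ not covering $B$ --- upgrades this via \cref{tau number remark 2021-09-07 12:11:53} to a support $\tau$-tilting $k\tilde{G}$-module. The ``In particular'' clause is then immediate: $\tilde{B} \induc_G^{\tilde{G}} M = \induc_I^{\tilde{G}}(\beta_{\tilde{B}} \induc_G^I M)$ for $\beta_{\tilde{B}}$ the preimage of $\tilde{B}$ under the bijection of \cref{Morita equivalence covering block} \ref{Morita item 2022-06-09 01:35:47}, and this module was just shown to be support $\tau$-tilting over $\tilde{B}$.

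The main difficulty is not any new idea but the two-sided bookkeeping at the algebra level: translating between the global support $\tau$-tilting property over $kI$ (respectively $k\tilde{G}$) and its block-by-block incarnation. All remaining ingredients --- Mackey's formula, the $\Omega$ and $\tau$ commutation lemma, and the approximation argument --- are already packaged inside \cref{main group algebra 2022-06-09 01:38:53} and \cref{Morita equivalence covering block}.
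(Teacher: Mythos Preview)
Your proof is correct and follows essentially the same route as the paper: induce in two stages through $I=\inertiagp_{\tilde G}(B)$, apply \cref{main group algebra 2022-06-09 01:38:53} to $G\trianglelefteq I$, cut into blocks of $kI$ covering $B$ via \cref{Morita equivalence covering block}~\ref{Morita item direct sum 2022-06-09 01:49:36}, push each piece to a block of $k\tilde G$ by the Morita equivalence~\ref{Morita item 2022-06-09 01:35:47}, and reassemble using transitivity of induction. The only difference is that you spell out the block-to-global bookkeeping (padding idempotents, splitting the projective complement across blocks) that the paper leaves implicit when it passes from ``support $\tau$-tilting over $kI$'' to ``support $\tau$-tilting over each $\beta_i$'' and back again over $k\tilde G$.
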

\begin{proof}
	Let \(\tilde{B_1}=\tilde{B}, \ldots, \tilde{B_e}\) be the all blocks of \(k\tilde{G}\) covering \(B\).
	By \cref{Morita equivalence covering block} \ref{Morita item 2022-06-09 01:35:47}, we can take \(\beta_1, \ldots, \beta_e\) the blocks of \(k\inertiagp_{\tilde{G}}(B)\) satisfying the induction functor \(\induc_{\inertiagp_{\tilde{G}}(B)}^{\tilde{G}}\) restricts to a Morita equivalence

	\begin{equation}
		\begin{tikzcd}[column sep=20pt]
			\induc_{\inertiagp_{\tilde{G}}(B)}^{\tilde{G}} \colon \beta_i \lmod \ar[r]&\tilde{B_i}\lmod
		\end{tikzcd}
	\end{equation}
	for any \(i=1,\ldots, e\).
	By \cref{main group algebra 2022-06-09 01:38:53}, the induced module \(\induc_G^{\inertiagp_{\tilde{G}}(B)}M\) is a support \(\tau\)-tilting \(k\inertiagp_{\tilde{G}}(B)\)-module and hence \(\beta_i\induc_G^{\inertiagp_{\tilde{G}}(B)}M\) is a support \(\tau\)-tilting \(\beta_i\)-module for any \(i=1,\ldots, e\).
	Therefore, we have that \(\induc_{\inertiagp_{\tilde{G}}(B)}^{\tilde{G}}\beta_i\induc_G^{\inertiagp_{\tilde{G}}(B)}M\) is a support \(\tau\)-tilting \(\tilde{B_i}\)-module.
	By \cref{Morita equivalence covering block} \ref{Morita item direct sum 2022-06-09 01:49:36} and \cref{Theorem: Frobenius and projective} \ref{transitive induc 2022-06-09 02:03:23}, we have
	\begin{align}
		\bigoplus_{i=1}^e\induc_{\inertiagp_{\tilde{G}}(B)}^{\tilde{G}}\beta_i\induc_G^{\inertiagp_{\tilde{G}}(B)}M
		 & \cong \induc_{\inertiagp_{\tilde{G}}(B)}^{\tilde{G}}\bigoplus_{i=1}^e\beta_i\induc_G^{\inertiagp_{\tilde{G}}(B)}M \\
		 & \cong \induc_{\inertiagp_{\tilde{G}}(B)}^{\tilde{G}}\induc_G^{\inertiagp_{\tilde{G}}(B)}M                         \\
		 & \cong \induc_G^{\tilde{G}}M.
	\end{align}
	Hence, \(\induc_G^{\tilde{G}} M\) is a support \(\tau\)-tilting \(k\tilde{G}\)-module.
	Therefore, we get that \(\tilde{B}\induc_G^{\tilde{G}}M\) be a support \(\tau\)-tilting \(\tilde{B}\)-module.
\end{proof}
\begin{corollary}\label{ind ord 2022-08-28 14:58:52}
	Let \(G\) be a normal subgroup of a finite group \(\tilde{G}\), \(B\) a block of \(kG\) and \(\tilde{B}\) a block of \(k\tilde{G}\) covering \(B\).
	If \(M\geq M'\) in \(\stautilt B\) for \(\inertiagp_{\tilde{G}}(B)\)-invariant support \(\tau\)-tilting \(B\)-modules \(M\) and \(M'\), then \(\tilde{B}\induc_G^{\tilde{G}}M\geq \tilde{B}\induc_G^{\tilde{G}}M'\) in \(\stautilt \tilde{B}\).
\end{corollary}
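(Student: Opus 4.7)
The strategy is to push the defining epimorphism through an exact, direct-sum-preserving functor. By \cref{main block 2022-07-13 08:57:19}, both $\tilde{B}\induc_G^{\tilde{G}}M$ and $\tilde{B}\induc_G^{\tilde{G}}M'$ are support $\tau$-tilting $\tilde{B}$-modules, so it remains only to produce, for some positive integer $s$, an epimorphism
\begin{equation}
    \bigl(\tilde{B}\induc_G^{\tilde{G}}M\bigr)^{\oplus s}\twoheadrightarrow \tilde{B}\induc_G^{\tilde{G}}M'
\end{equation}
in $k\tilde{G}\lmod$; this will witness the required inequality in $\stautilt \tilde{B}$ via \cref{order 2022-08-30 16:55:02}.

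Concretely, I would unfold the hypothesis $M\geq M'$ to obtain a positive integer $r$ and an epimorphism $\varphi\colon M^{\oplus r}\twoheadrightarrow M'$ of $B$-modules. Then I would apply the functor $\tilde{B}\induc_G^{\tilde{G}}(-)$, which is the composition of the induction functor $\induc_G^{\tilde{G}}={}_{k\tilde{G}}k\tilde{G}\otimes_{kG}-$ with multiplication by the central idempotent $1_{\tilde{B}}$ (equivalently, the exact functor $\tilde{B}\otimes_{k\tilde{G}}-$). Since $k\tilde{G}$ is a free right $kG$-module, induction is exact, and projection onto a block summand is trivially exact, so the composite is exact and additive. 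Applying it to $\varphi$ therefore yields an epimorphism
\begin{equation}
    \tilde{B}\induc_G^{\tilde{G}}\varphi\colon \bigl(\tilde{B}\induc_G^{\tilde{G}}M\bigr)^{\oplus r}\cong \tilde{B}\induc_G^{\tilde{G}}(M^{\oplus r})\twoheadrightarrow \tilde{B}\induc_G^{\tilde{G}}M',
\end{equation}
which is exactly the desired epimorphism with $s=r$.

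There is essentially no obstacle: the argument reduces to the exactness and additivity of $\tilde{B}\induc_G^{\tilde{G}}(-)$, together with the already-established fact (\cref{main block 2022-07-13 08:57:19}) that the source and target of the resulting map are indeed support $\tau$-tilting $\tilde{B}$-modules. The only point requiring a moment's care is noting that $\induc_G^{\tilde{G}}$ commutes with finite direct sums (since it is a left adjoint by \cref{Theorem: Frobenius and projective}\ref{frob adjoint 2022-06-02 15:44:10}), so that $(\tilde{B}\induc_G^{\tilde{G}}M)^{\oplus r}$ correctly identifies with $\tilde{B}\induc_G^{\tilde{G}}(M^{\oplus r})$.
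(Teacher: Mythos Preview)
Your argument is correct and is exactly the approach the paper takes: the paper's proof simply says the statement is obvious from the exactness of $\induc_G^{\tilde{G}}$ together with \cref{main block 2022-07-13 08:57:19}, and you have spelled out precisely those two ingredients (plus additivity and the block projection) in full detail.
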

\begin{proof}
	By the exactness of the induction functor \(\induc_G^{\tilde{G}}\) and \cref{main block 2022-07-13 08:57:19}, the statement is obvious.
\end{proof}
We will demonstrate that there is an interrelation between the orders of \(\inertiagp_{\tilde{G}}(B)\)-invariant support \(\tau\)-tilting \(B\)-modules and support \(\tau\)-tilting \(k\tilde{G}\)-modules.
\begin{proposition}\label{induc tau 2022-08-28 14:50:44}
	Let \(M\) be an \(\inertiagp_{\tilde{G}}(B)\)-invariant \(B\)-module.
	If the induced module \(\induc_G^{\tilde{G}}M\) is a support \(\tau\)-tilting \(k\tilde{G}\)-module, then \(M\) is a support \(\tau\)-tilting \(B\)-module.
\end{proposition}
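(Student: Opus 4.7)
The plan is to verify the exact-sequence characterization of \cref{lem app 2022-06-02 16:07:34} for \(M\). I begin by reducing to the case \(\tilde{G}=\inertiagp_{\tilde{G}}(B)\): writing \(\induc_G^{\tilde{G}}M\cong \induc_{\inertiagp_{\tilde{G}}(B)}^{\tilde{G}}\induc_G^{\inertiagp_{\tilde{G}}(B)}M\) via \cref{Theorem: Frobenius and projective} \ref{transitive induc 2022-06-09 02:03:23}, and applying the Morita equivalences of \cref{Morita equivalence covering block} block by block, the support \(\tau\)-tilting property of \(\induc_G^{\tilde{G}}M\) over \(k\tilde{G}\) transfers to the analogous property of \(\induc_G^{\inertiagp_{\tilde{G}}(B)}M\) over \(k\inertiagp_{\tilde{G}}(B)\); so I may assume \(\tilde{G}=\inertiagp_{\tilde{G}}(B)\), whence \(M\) is \(\tilde{G}\)-invariant and \cref{commutative lemma 2022-06-02 15:41:42} applies in full strength. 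Running the computation from the proof of \cref{main group algebra 2022-06-09 01:38:53} in reverse, by combining \cref{commutative lemma 2022-06-02 15:41:42} \ref{2020-03-25 01:17:54}, Frobenius reciprocity, \cref{Mackey's decomposition formula 2022-08-28 15:16:37}, and invariance, I obtain
\[
\Hom_{k\tilde{G}}(\induc_G^{\tilde{G}}M,\tau\induc_G^{\tilde{G}}M)\cong \bigoplus_{x\in[\tilde{G}/G]}\Hom_{kG}(M,\tau M),
\]
and the \(\tau\)-rigidity of \(\induc_G^{\tilde{G}}M\) forces \(\Hom_{kG}(M,\tau M)=0\); hence \(M\) is \(\tau\)-rigid.

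The heart of the argument is to lift an \(\add M\)-approximation of \(kG\) across induction. Since \(\add M\) is functorially finite, fix a left \(\add M\)-approximation \(f\colon kG\to M'\) with \(M'\in\add M\) and let \(M''\) denote its cokernel. Exactness of \(\induc_G^{\tilde{G}}\) gives
\[
\begin{tikzcd}
k\tilde{G}\cong \induc_G^{\tilde{G}}kG\ar[r,"\induc_G^{\tilde{G}}f"]&\induc_G^{\tilde{G}}M'\ar[r]&\induc_G^{\tilde{G}}M''\ar[r]&0.
\end{tikzcd}
\]
I claim \(\induc_G^{\tilde{G}}f\) is itself a left \(\add\induc_G^{\tilde{G}}M\)-approximation of \(k\tilde{G}\): for any \(\tilde g\colon k\tilde{G}\to Y\) with \(Y\in \add\induc_G^{\tilde{G}}M\), Frobenius reciprocity (\cref{Theorem: Frobenius and projective} \ref{frob adjoint 2022-06-02 15:44:10}) converts \(\tilde g\) into a map \(g\colon kG\to \restr_G^{\tilde{G}}Y\), and \cref{Mackey's decomposition formula 2022-08-28 15:16:37} together with the \(\tilde{G}\)-invariance of \(M\) gives \(\restr_G^{\tilde{G}}Y\in\add M\); hence \(g\) factors through \(f\), and adjoining the factoring map back produces a factorization of \(\tilde g\) through \(\induc_G^{\tilde{G}}f\). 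Since \(\induc_G^{\tilde{G}}M\) is support \(\tau\)-tilting, \cref{lem app 2022-06-02 16:07:34} then forces \(\induc_G^{\tilde{G}}M''\in\add\induc_G^{\tilde{G}}M\).

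It remains to descend the conclusion. Applying \(\restr_G^{\tilde{G}}\) and \cref{Mackey's decomposition formula 2022-08-28 15:16:37} gives
\[
\bigoplus_{x\in[\tilde{G}/G]}xM''\cong \restr_G^{\tilde{G}}\induc_G^{\tilde{G}}M''\in \add\restr_G^{\tilde{G}}\induc_G^{\tilde{G}}M=\add M,
\]
and the \(x=1\) summand yields \(M''\in\add M\). The sequence \(kG\xrightarrow{f}M'\to M''\to 0\) now satisfies \cref{lem app 2022-06-02 16:07:34}, so \(M\) is support \(\tau\)-tilting over \(kG\); since \(M\) is a \(B\)-module and \(kG\) splits as the product of \(B\) with its complementary blocks, this is equivalent to \(M\) being support \(\tau\)-tilting over \(B\). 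I expect the main obstacle to be precisely this two-sided transfer across \(\induc_G^{\tilde{G}}\) and \(\restr_G^{\tilde{G}}\): both the promotion of \(f\) to an \(\add\induc_G^{\tilde{G}}M\)-approximation and the descent of \(\add\)-membership for the cokernel rely crucially on \(\restr_G^{\tilde{G}}\induc_G^{\tilde{G}}M\cong M^{\oplus[\tilde{G}:G]}\), and the reduction to the inertia group is exactly what secures this identification.
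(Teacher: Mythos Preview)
Your argument is correct and takes a genuinely different route from the paper. The paper verifies the pair-theoretic characterization of \cref{tau tilting pair 2022-08-27 08:26:25}: it picks the projective \(\tilde P\) witnessing the support \(\tau\)-tilting pair for \(\induc_G^{\tilde G}M\), sets \(P=B\restr_G^{\tilde G}\tilde P\), and checks directly that \((M,P)\) satisfies the maximality condition, handling the non-inertial conjugates via block orthogonality (\cref{remark another block 2022-08-30 16:09:09}) rather than by a preliminary reduction to \(\tilde G=\inertiagp_{\tilde G}(B)\). You instead mirror the forward proof of \cref{main group algebra 2022-06-09 01:38:53}: build an \(\add M\)-approximation downstairs, induce it to an \(\add\induc_G^{\tilde G}M\)-approximation upstairs, and descend the \(\add\)-membership of the cokernel through \(\restr_G^{\tilde G}\). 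This is pleasantly symmetric with the forward direction and avoids ever naming the projective partner; the cost is the extra Morita reduction and, more importantly, that your appeal to \cref{lem app 2022-06-02 16:07:34} uses a bit more than the literal statement. That proposition only asserts the \emph{existence} of one good sequence, whereas you need that \emph{every} left \(\add\induc_G^{\tilde G}M\)-approximation of \(k\tilde G\) has cokernel in \(\add\induc_G^{\tilde G}M\). This is of course true---any such approximation splits as the minimal one plus a summand in \(\add\induc_G^{\tilde G}M\), by left-minimality---but you should say so explicitly.
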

\begin{proof}
	By \cref{tau number remark 2021-09-07 12:11:53}, we can take a projective \(k\tilde{G}\)-module \(\tilde{P}\) satisfying that \(\Hom_{k\tilde{G}}(\tilde{P},\induc_G^{\tilde{G}}M)=0\) and \(|\tilde{P}|+|\induc_G^{\tilde{G}}M|=|\tilde{B}|\).
	By \cref{tau tilting pair 2022-08-27 08:26:25} and \cref{Theorem: Frobenius and projective} \ref{proj to proj 2022-06-02 12:44:53}, we enough to show the following:
	\begin{enumerate}
		\item \(M\) is a \(\tau\)-rigid \(B\)-module.
		\item \(\Hom_B(B\restr_G^{\tilde{G}}\tilde{P},M)=0\).
		\item If \(\Hom_B(M,\tau X)=0\), \(\Hom_B(X, \tau M)=0\) and \(\Hom_B(B\restr_G^{\tilde{G}}\tilde{P},X)=0\), then \(X\in \add M\) for any \(B\)-module \(X\).
	\end{enumerate}
	By \cref{remark another block 2022-08-30 16:09:09}, the \(\inertiagp_{\tilde{G}}(B)\)-invariance of \(M\), \cref{Mackey's decomposition formula 2022-08-28 15:16:37}, \cref{Theorem: Frobenius and projective} \ref{frob adjoint 2022-06-02 15:44:10}, \cref{commutative lemma 2022-06-02 15:41:42} and the \(\tau\)-rigidity of \(\induc_G^{\tilde{G}}M\), we have that
	\begin{align}
		\Hom_B(M,\tau M)^{\oplus|\inertiagp_{\tilde{G}}(B):G|}
		 & \cong\Hom_{kG}(\bigoplus_{x\in [\inertiagp_{\tilde{G}}(B)/G]}xM,\tau M)
		\oplus \Hom_{kG}(\bigoplus_{\substack{x\in [\tilde{G}/G]
		\\x\notin \inertiagp_{\tilde{G}}(B)}}xM,\tau M)\\
		 & \cong\Hom_{kG}(\bigoplus_{x\in [\tilde{G}/G]}xM,\tau M)                  \\
		 & \cong\Hom_{kG}(\restr_G^{\tilde{G}}\induc_G^{\tilde{G}}M,\tau M)         \\
		 & \cong\Hom_{k\tilde{G}}(\induc_G^{\tilde{G}}M,\induc_G^{\tilde{G}}\tau M) \\
		 & \cong\Hom_{k\tilde{G}}(\induc_G^{\tilde{G}}M,\tau \induc_G^{\tilde{G}}M) \\
		 & = 0.
	\end{align}
	Hence, we have that the \(B\)-module \(M\) is a \(\tau\)-rigid \(B\)-module.
	Also, we have that
	\begin{align}
		\Hom_B(B\restr_G^{\tilde{G}}\tilde{P},M)
		 & \cong \Hom_{kG}(\restr_G^{\tilde{G}}\tilde{P},M)         \\
		 & \cong \Hom_{k\tilde{G}}(\tilde{P},\induc_G^{\tilde{G}}M) \\
		 & = 0.
	\end{align}
	For a \(B\)-module \(X\), we assume that \(\Hom_B(M,\tau X)=0\), \(\Hom_B(X,\tau  M)=0\) and \(\Hom_B(B\restr_G^{\tilde{G}}\tilde{P},X)=0\).
	Then by the \cref{commutative lemma 2022-06-02 15:41:42}, \cref{Theorem: Frobenius and projective} \ref{frob adjoint 2022-06-02 15:44:10}, \cref{Mackey's decomposition formula 2022-08-28 15:16:37}, \(\inertiagp_{\tilde{G}}(B)\)-invariance of \(M\) and the assumption above, we have that
	\begin{align}
		\Hom_{k\tilde{G}}(\induc_G^{\tilde{G}}M,\tau \induc_G^{\tilde{G}}X)
		 & \cong\Hom_{k\tilde{G}}(\induc_G^{\tilde{G}}M,\induc_G^{\tilde{G}}\tau X) \\
		 & \cong\Hom_{kG}(\restr_G^{\tilde{G}}\induc_G^{\tilde{G}}M,\tau X)         \\
		 & \cong\Hom_{kG}(\bigoplus_{x\in [\tilde{G}/G]}xM,\tau X)                  \\
		 & \cong\Hom_{kG}(\bigoplus_{x\in [\inertiagp_{\tilde{G}}(B)/G]}xM,\tau X)
		\oplus \Hom_{kG}(\bigoplus_{\substack{x\in [\tilde{G}/G]
		\\x\notin \inertiagp_{\tilde{G}}(B)}}xM,\tau X)\\
		 & \cong\Hom_B(M,\tau X)^{\oplus|\inertiagp_{\tilde{G}}(B):G|}              \\
		 & = 0.
	\end{align}
	Similarly, we have that \(\Hom_{k\tilde{G}}( \induc_G^{\tilde{G}}X,\tau\induc_G^{\tilde{G}}M)=0\).
	Also, we have that
	\begin{align}
		\Hom_{k\tilde{G}}(\tilde{P},\induc_G^{\tilde{G}}X)
		 & \cong \Hom_{kG}(\restr_G^{\tilde{G}}\tilde{P},X) \\
		 & \cong \Hom_{B}(B\restr_G^{\tilde{G}}\tilde{P},X) \\
		 & = 0.
	\end{align}
	Hence, we have that \(\induc_G^{\tilde{G}}X\in \add \induc_G^{\tilde{G}} M\) by \cref{tau tilting pair 2022-08-27 08:26:25}.
	Therefore, we have that \(\restr_G^{\tilde{G}}\induc_G^{\tilde{G}}X\in \add\restr_G^{\tilde{G}}\induc_G^{\tilde{G}}M\).
	In particular, we have that \(X\in \add M\) since \(\restr_G^{\tilde{G}}\induc_G^{\tilde{G}}X\cong \bigoplus_{x\in [\tilde{G}/G]}xX\) and \(\restr_G^{\tilde{G}}\induc_G^{\tilde{G}}M\cong \bigoplus_{x\in [\tilde{G}/G]}M\), which implies that \(M\) is a support \(\tau\)-tilting \(B\)-module by \cref{tau tilting pair 2022-08-27 08:26:25}
\end{proof}
\begin{theorem}\label{iff induc 2022-08-28 16:22:55}
	Let \(M\) and \(M'\) be \(\inertiagp_{\tilde{G}}(B)\)-invariant \(B\)-modules.
	Then the following hold:
	\begin{enumerate}
		\item \(M\) is a support \(\tau\)-tilting \(B\)-module if and only if \(\induc_G^{\tilde{G}}M\) is a support \(\tau\)-tilting \(k\tilde{G}\)-module.\label{iff stau 2022-08-28 14:48:21}
		\item Assume that \(M\) and \(M'\) are support \(\tau\)-tilting \(B\)-modules. Then \(M\geq M'\) in \(\stautilt B\) if and only if \(\induc_G^{\tilde{G}}M\geq \induc_G^{\tilde{G}}M'\) in \(\stautilt k\tilde{G}\).\label{iff ord 2022-08-28 14:49:24}
	\end{enumerate}
\end{theorem}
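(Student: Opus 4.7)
The plan is to split both assertions into an easy direction and a substantive one, and in each case to invoke the results already established. For part \ref{iff stau 2022-08-28 14:48:21}, the forward implication is a restatement of \cref{main block 2022-07-13 08:57:19}, and the backward implication is exactly \cref{induc tau 2022-08-28 14:50:44}, so no additional argument is needed.

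For part \ref{iff ord 2022-08-28 14:49:24}, the forward direction is immediate from the exactness of \(\induc_G^{\tilde{G}}\): if \(\varphi \colon M^{\oplus r} \twoheadrightarrow M'\) witnesses \(M \geq M'\) in \(\stautilt B\), then \(\induc_G^{\tilde{G}}\varphi\) is an epimorphism \((\induc_G^{\tilde{G}}M)^{\oplus r} \twoheadrightarrow \induc_G^{\tilde{G}}M'\), and by part \ref{iff stau 2022-08-28 14:48:21} both source and target lie in \(\stautilt k\tilde{G}\), so \(\induc_G^{\tilde{G}}M \geq \induc_G^{\tilde{G}}M'\).

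The substantive content is the converse of part \ref{iff ord 2022-08-28 14:49:24}. Given an epimorphism \(\psi \colon (\induc_G^{\tilde{G}}M)^{\oplus r} \twoheadrightarrow \induc_G^{\tilde{G}}M'\) in \(k\tilde{G}\lmod\), I would apply the exact functor \(B\restr_G^{\tilde{G}}(-)\) to recover an epimorphism between direct sums of \(M\) and of \(M'\) in \(B\lmod\). The calculation rests on \cref{Mackey's decomposition formula 2022-08-28 15:16:37}: we have \(\restr_G^{\tilde{G}}\induc_G^{\tilde{G}}M \cong \bigoplus_{x \in [\tilde{G}/G]} xM\), and since \(xM\) lies in the block \(xBx^{-1}\), multiplication by the central idempotent \(1_B\) annihilates every summand with \(x \notin \inertiagp_{\tilde{G}}(B)\), while the \(\inertiagp_{\tilde{G}}(B)\)-invariance of \(M\) collapses the remaining summands to copies of \(M\). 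Hence
\begin{equation}
B\restr_G^{\tilde{G}}\induc_G^{\tilde{G}} M \cong M^{\oplus [\inertiagp_{\tilde{G}}(B):G]},
\end{equation}
and analogously for \(M'\). Applying \(B\restr_G^{\tilde{G}}\) to \(\psi\) therefore yields an epimorphism \(M^{\oplus r[\inertiagp_{\tilde{G}}(B):G]} \twoheadrightarrow (M')^{\oplus [\inertiagp_{\tilde{G}}(B):G]}\), and composing with projection onto any one copy of \(M'\) produces the desired epimorphism witnessing \(M \geq M'\) in \(\stautilt B\).

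I do not anticipate a genuine obstacle: the heavy lifting has already been packaged into \cref{main block 2022-07-13 08:57:19} and \cref{induc tau 2022-08-28 14:50:44}, and the converse of part \ref{iff ord 2022-08-28 14:49:24} reduces to the standard trick of combining the block idempotent \(1_B\) with Mackey's formula to undo the induction up to a fixed multiplicity.
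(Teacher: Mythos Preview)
Your proposal is correct and follows essentially the same route as the paper. Part \ref{iff stau 2022-08-28 14:48:21} is handled identically, and for part \ref{iff ord 2022-08-28 14:49:24} both you and the paper argue the forward direction via exactness of \(\induc_G^{\tilde{G}}\) (the paper cites \cref{ind ord 2022-08-28 14:58:52}, whose proof is precisely that) and the converse via exactness of \(\restr_G^{\tilde{G}}\), Mackey's formula, and the \(\inertiagp_{\tilde{G}}(B)\)-invariance; your explicit use of the block idempotent \(1_B\) to cut down to the \(B\)-component is the detail the paper leaves implicit.
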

\begin{proof}
	\ref{iff stau 2022-08-28 14:48:21} is clear by \cref{main block 2022-07-13 08:57:19} and \cref{induc tau 2022-08-28 14:50:44}.
	In order to prove \ref{iff ord 2022-08-28 14:49:24}, we only show that if \(\induc_G^{\tilde{G}}M\geq \induc_G^{\tilde{G}}M'\) in \(\stautilt k\tilde{G}\) then \(M\geq M'\) in \(\stautilt B\) by \cref{ind ord 2022-08-28 14:58:52}, but it follows from the fact the restriction functor \(\restr_G^{\tilde{G}}\) is an exact functor, the \(\inertiagp_{\tilde{G}}(B)\)-invariance of \(M\) and \cref{Mackey's decomposition formula 2022-08-28 15:16:37}.
\end{proof}
\subsection{Examples}
Finally, we illustrate our main results with the following examples.
\begin{example}
	Let \(\tilde{G}\) be a finite group, \(G\) a normal subgroup of \(\tilde
	G\) with cyclic Sylow \(p\)-subgroup such that the quotient group \(\tilde{G}/G\) is a \(p\)-group, \(B\) a block of \(kG\) and \(\tilde{B}\) a block of \(k\tilde{G}\) covering \(B\).
	Then any support \(\tau\)-tilting \(B\)-module is \(\inertiagp_{\tilde{G}}(B)\)-invariant (see \cite[Lemma 4.2 (2)]{KK2}).
	Therefore, we can always apply \cref{main block 2022-07-13 08:57:19}.
\end{example}
\begin{example}
	Let \(G_1\) and \(G_2\) be arbitrary finite groups and \(M\) a support \(\tau\)-tilting \(kG_1\)-module.
	Then the group \(G_1\) is a normal subgroup of the direct product group \(G_1\times G_2\), and it is clear that \(M\cong xM\) for any \(x\in G_1\times G_2\).
	Therefore, the induced module \(\induc_{G_1}^{G_1\times G_2}M\cong kG_2\otimes_k M\) is support \(\tau\)-tilting \(k[G_1\times G_2]\)-module by \cref{main group algebra 2022-06-09 01:38:53}.
\end{example}
\begin{example}
	Let \(k\) be an algebraically closed field of characteristic \(p=2\), \(G\) the alternating group \(A_4\) of degree \(4\) and \(\tilde{G}\) the symmetric group \(S_4\) of degree \(4\).
	The principal blocks of \(kA_4\) and \(kS_4\) are themselves, respectively. Moreover, the block \(kA_4\) is covered by \(kS_4\).
	The algebras \(kA_4\) and \(kS_4\) are Brauer graph algebras associated to the Brauer graphs in \Cref{A4 Brauer graph 2022-07-16 22:29:08} and \Cref{S4 Brauer graph 2022-07-16 22:28:53}, respectively:

	\begin{figure}[ht]
		\begin{center}
			\subfigure[The Brauer graph of \(kA_4\)]{
				\begin{tikzpicture}
					% \draw[help lines] (0,0) grid (5,3);
					% \coordinate (x) at (-10,0);
					\coordinate (O) at (0,0);
					\coordinate (A) at (4,0);
					\coordinate (B) at (2,3);
					\draw (O) edge node[below] {\(k_{A_4}=1\)} (A) ;
					\draw (A) edge node[right] {\(2\)} (B);
					\draw (B) edge node[left] {\(3\)}(O);
					\foreach \P in {O,A,B} \fill[white] (\P) circle (2pt);
					\foreach \P in {O,A,B} \draw (\P) circle (2pt);
				\end{tikzpicture}
				\label{A4 Brauer graph 2022-07-16 22:29:08}
			}
			\hfill
			\subfigure[The Brauer graph of \(kS_4\)]{

				\begin{tikzpicture}
					% \node(O) at (0,0){\(O\)};
					% \draw[help lines] (0,0) grid (10,3);
					\node(m) at (5.5,1.5){multiplicity: \(2\)};
					\coordinate (O) at (2,1.5);
					\coordinate (A) at (4,1.5);
					\coordinate (B) at (1,1.5);
					\draw (O) edge node[below] {\(2'\)} (A) ;
					\foreach \P in {O,A} \draw (\P) circle (2pt);
					\draw[out=-90,in=-90] (A) edge node[below] {\(1'=k_{S_4}\)} (B);
					\draw[out=90,in=90] (A) edge node[below] {} (B);
					% \foreach \P in {O,A} \fill[white] (\P) circle (2pt);
					\fill[white] (A) circle (2pt);
					\fill[black] (O) circle (2pt);
				\end{tikzpicture}

				\label{S4 Brauer graph 2022-07-16 22:28:53}
			}
		\end{center}
		\vspace{-0.5cm}
		\caption{Brauer graphs}
		\label{Brauer 2022-07-16 12:36:42}
	\end{figure}

	Now we draw the Hasse diagram \(\mathcal{H}(\stautilt kA_4)\) of the partially ordered set \(\stautilt kA_4\) as follows:
	\begin{landscape}
		\begin{figure}[ht]
			\begin{tikzpicture}[scale=0.8]
				% \node(O) at (0,0){\(O\)};
				% \draw[help lines] (-10,-10) grid (15,6);
				\node(hasse) at (-4,6) {\(\mathcal{H}(\stautilt kA_4):\)};
				\node(projene)[fill=orange] at (4,6){\(
					P(1)\oplus P(2)\oplus P(3)
					\)};
				\node(a) at (4,4){\(P_1\oplus
					\begin{smallmatrix}
						& 3 &   & 1 &   \\
						1 &   & 2 &   & 3 \\
					\end{smallmatrix}
					\oplus P_3\)};
				\node(j) at (4,0){\(
					\begin{smallmatrix}
						3\\
						1
					\end{smallmatrix}\oplus
					\begin{smallmatrix}
						& 2 &   & 3 &   \\
						3 &   & 3 &   & 2 \\
					\end{smallmatrix}\oplus
					\begin{smallmatrix}
						1\\
						3
					\end{smallmatrix}
					\)};
				\node(n) at (-0,0){\(
					\begin{smallmatrix}
						3\\
						1
					\end{smallmatrix}\oplus
					\begin{smallmatrix}
						3\\
						2
					\end{smallmatrix}\oplus
					P_3
					\)};
				\node(w) at (0,-2){\(
					\begin{smallmatrix}
						3\\
						1
					\end{smallmatrix}\oplus
					\begin{smallmatrix}
						3\\
						2
					\end{smallmatrix}\oplus
					3
					\)};
				\node(x) at (2,-4){\(
					\begin{smallmatrix}
						3\\
						1
					\end{smallmatrix}\oplus
					3\)};
				\node(z) at (-2,-4){\(
					\begin{smallmatrix}
						3\\
						2
					\end{smallmatrix}\oplus
					3
					\)};
				\node(y) at (0,-5){\(
					3
					\)};
				\node(k) at (4,-2){\(
					\begin{smallmatrix}
						3\\
						1
					\end{smallmatrix}\oplus
					\begin{smallmatrix}
						1\\
						3
					\end{smallmatrix}
					\)};
				\node(g) at (2,2){\(
					\begin{smallmatrix}
						3\\
						1
					\end{smallmatrix}\oplus
					\begin{smallmatrix}
						& 3 &   & 1 &   \\
						1 &   & 2 &   & 3 \\
					\end{smallmatrix}
					\oplus P_3
					\)};
				\node(h) at (6,2){\(
					P_1\oplus
					\begin{smallmatrix}
						& 3 &   & 1 &   \\
						1 &   & 2 &   & 3 \\
					\end{smallmatrix}\oplus
					\begin{smallmatrix}
						1\\
						3
					\end{smallmatrix}
					\)};
				\node(b)[fill=lime] at (-4,4){\(
					\begin{smallmatrix}
						& 2 &   & 3 &   \\
						3 &   & 1 &   & 2 \\
					\end{smallmatrix}
					\oplus P_2\oplus P_3
					\)};
				\node(c) at (12,4){\(
					P_1\oplus P_2\oplus
					\begin{smallmatrix}
						& 1 &   & 2 &   \\
						2 &   & 3 &   & 1 \\
					\end{smallmatrix}
					\)};
				\node(d) at (-6,2){\(
					\begin{smallmatrix}
						& 2 &   & 3 &   \\
						3 &   & 1 &   & 2 \\
					\end{smallmatrix}
					\oplus P_2\oplus
					\begin{smallmatrix}
						2\\3
					\end{smallmatrix}
					\)};
				\node(l)[fill=green] at (-4,0){\(
					\begin{smallmatrix}
						& 2 &   & 3 &   \\
						3 &   & 1 &   & 2 \\
					\end{smallmatrix}\oplus
					\begin{smallmatrix}
						3\\
						2
					\end{smallmatrix}\oplus
					\begin{smallmatrix}
						2\\
						3
					\end{smallmatrix}
					\)};
				\node(a1)[fill=olive] at (-4,-2){\(
					\begin{smallmatrix}
						3\\
						2
					\end{smallmatrix}\oplus
					\begin{smallmatrix}
						2\\
						3
					\end{smallmatrix}
					\)};
				\node(b1) at (-6,-4){\(
					2\oplus
					\begin{smallmatrix}
						2\\
						3
					\end{smallmatrix}
					\)};
				\node(c1) at (-8,-5){\(
					2
					\)};
				\node(m) at (-8,0){\(
					\begin{smallmatrix}
						2\\
						1
					\end{smallmatrix}\oplus
					P_2\oplus
					\begin{smallmatrix}
						2\\
						3
					\end{smallmatrix}
					\)};
				\node(pa1) at (-9,1){};
				\node(d1) at (-8,-2){\(
					\begin{smallmatrix}
						2\\
						1
					\end{smallmatrix}\oplus
					2\oplus
					\begin{smallmatrix}
						2\\
						3
					\end{smallmatrix}
					\)};
				\node(pa5) at (-9,-3){};
				\node(pa6) at (-9,-4){};
				\node(f) at (-2,2){\(
					\begin{smallmatrix}
						& 2 &   & 3 &   \\
						3 &   & 1 &   & 2 \\
					\end{smallmatrix}
					\oplus
					\begin{smallmatrix}
						3\\
						2
					\end{smallmatrix}
					\oplus
					P_3
					\)};
				\node(e) at (10,2){\(
					P_1\oplus
					\begin{smallmatrix}
						1\\
						2
					\end{smallmatrix}\oplus
					\begin{smallmatrix}
						& 1 &   & 2 &   \\
						2 &   & 3 &   & 1 \\
					\end{smallmatrix}
					\)};
				\node(u)[fill=cyan] at (8,0){\(
					P_1\oplus
					\begin{smallmatrix}
						1\\
						2
					\end{smallmatrix}\oplus
					\begin{smallmatrix}
						1\\
						3
					\end{smallmatrix}
					\)};
				\node(v)[fill=blue!20] at (8,-2){\(
					1\oplus
					\begin{smallmatrix}
						1\\
						2
					\end{smallmatrix}\oplus
					\begin{smallmatrix}
						1\\
						3
					\end{smallmatrix}
					\)};
				\node(o) at (12,0){\(
					\begin{smallmatrix}
						2\\
						1
					\end{smallmatrix}\oplus
					\begin{smallmatrix}
						1\\
						2
					\end{smallmatrix}\oplus
					\begin{smallmatrix}
						& 1 &   & 2 &   \\
						2 &   & 3 &   & 1 \\
					\end{smallmatrix}
					\)};
				\node(p) at (12,-2){\(
					\begin{smallmatrix}
						2\\
						1
					\end{smallmatrix}\oplus
					\begin{smallmatrix}
						1\\
						2
					\end{smallmatrix}
					\)};
				\node(t) at (14,-4){\(
					\begin{smallmatrix}
						2\\
						1
					\end{smallmatrix}\oplus
					2
					\)};
				\node(pa3) at (15,-3){};
				\node(pa4) at (15,-5){};
				\node(q) at (10,-4){\(
					1
					\oplus
					\begin{smallmatrix}
						1\\
						2
					\end{smallmatrix}
					\)};
				\node(r) at (6,-4){\(
					1
					\oplus
					\begin{smallmatrix}
						1\\
						3
					\end{smallmatrix}
					\)};
				\node(s)[fill=pink] at (8,-5){\(
					1
					\)};
				\node(i) at (14,2){\(
					\begin{smallmatrix}
						2\\
						1
					\end{smallmatrix}\oplus
					P_2\oplus
					\begin{smallmatrix}
						& 1 &   & 2 &   \\
						2 &   & 3 &   & 1 \\
					\end{smallmatrix}
					\)};
				\node(pa2) at (15,1){};
				\node(zero)[fill=red,opacity=0.6] at (0,-7){\(0\)};
				\draw[->](projene) edge node{} (a);
				\draw[->](projene) edge node{} (b);
				\draw[->](projene) edge node{} (c);
				\draw[->](b) edge node{} (d);
				\draw[->](a) edge node{} (g);
				\draw[->](a) edge node{} (h);
				\draw[->](b) edge node{} (f);
				\draw[->](d) edge node{} (l);
				\draw[->](d) edge node{} (m);
				\draw[->](f) edge node{} (l);
				\draw[->](c) edge node{} (e);
				\draw[->](c) edge node{} (i);
				\draw[->](y) edge node{} (zero);
				\draw[->](p) edge node{} (q);
				\draw[->](q) edge node{} (s);
				\draw[->](r) edge node{} (s);
				\draw[->](s) edge node{} (zero);
				\draw[->](c1) edge node{} (zero);
				\draw[->](b1) edge node{} (c1);
				\draw[->](i) edge node{} (o);
				\draw[->](e) edge node{} (o);
				\draw[->](e) edge node{} (u);
				\draw[->](o) edge node{} (p);
				\draw[->](x) edge node{} (y);
				\draw[->](z) edge node{} (y);
				\draw[->](g) edge node{} (n);
				\draw[->](f) edge node{} (n);
				\draw[->](h) edge node{} (j);
				\draw[->](h) edge node{} (u);
				\draw[->](u) edge node{} (v);
				\draw[->](v) edge node{} (q);
				\draw[->](v) edge node{} (r);
				\draw[->](k) edge node{} (r);
				\draw[->](k) edge node{} (x);
				\draw[->](g) edge node{} (j);
				% \draw[->](i) edge node{} (m);
				\draw[->](m) edge node{} (d1);
				\draw[->](l) edge node{} (a1);
				\draw[->](w) edge node{} (x);
				\draw[->](w) edge node{} (z);
				\draw[->](a1) edge node{} (z);
				\draw[->](a1) edge node{} (b1);
				\draw[->](d1) edge node{} (b1);
				\draw[->](p) edge node{} (t);
				\draw[->](n) edge node{} (w);
				\draw[->](j) edge node{} (k);
				\draw[->](pa1) edge node{} (m);
				\draw[-](i) edge node{} (pa2);
				\draw[->](pa3) edge node{} (t);
				\draw[-](t) edge node{} (pa4);
				\draw[-](d1) edge node{} (pa5);
				\draw[->](pa6) edge node{} (c1);
			\end{tikzpicture}
			\caption[]{The Hasse diagram of \(\stautilt kA_4\)}
			\label{Hasse kA4 2022-07-07 02:43:05}
		\end{figure}
	\end{landscape}
	\noindent
	The colored support \(\tau\)-tilting modules in \Cref{Hasse kA4 2022-07-07 02:43:05} are all of the invariant support \(\tau\)-tilting modules under the action of \(S_4\).
	Next we draw the Hasse diagram \(\mathcal{H}(\stautilt kS_4)\) of partially ordered set \(\stautilt kS_4\) as follows:
	\begin{figure}[ht]
		\centering
		\begin{tikzpicture}
			\node(H) at (-5,1){\(\mathcal{H}(\stautilt(kS_4)):\)};
			\node(a)[fill=orange] at (-2,0){\(
				P_{1'}\oplus P_{2'}
				\)};
			\node(b)[fill=lime] at (-4,-1){\(
				\begin{smallmatrix}
					&2'&&&2'&\\
					&&&1'&&\\
					2'&&1'&&&2'
				\end{smallmatrix}
				\oplus P_{2'}
				\)};
			\node(d)[fill=green] at (-4,-3){\(
				\begin{smallmatrix}
					&2'&&&2'&\\
					&&&1'&&\\
					2'&&1'&&&2'
				\end{smallmatrix}
				\oplus
				\begin{smallmatrix}
					2'\\
					2'
				\end{smallmatrix}
				\)};
			\node(e)[fill=olive] at (-4,-5){\(
				\begin{smallmatrix}
					2'\\
					2'
				\end{smallmatrix}
				\)};
			\node(zero)[fill=red,opacity=0.6] at (-2,-6){\(
				0
				\)};
			\node(c)[fill=cyan] at (0,-1){\(
				P_{1'}\oplus
				\begin{smallmatrix}
					1'\\
					1'\\
					2'
				\end{smallmatrix}
				\)};
			\node(f)[fill=blue!20] at (0,-3){\(
				\begin{smallmatrix}
					1'\\
					1'
				\end{smallmatrix}
				\oplus
				\begin{smallmatrix}
					1'\\
					1'\\
					2'
				\end{smallmatrix}
				\)};
			\node(g)[fill=pink] at (0,-5){\(
				\begin{smallmatrix}
					1'\\
					1'
				\end{smallmatrix}
				\)};
			\draw[->](a) edge node{} (b);
			\draw[->](a) edge node{} (c);
			\draw[->](b) edge node{} (d);
			\draw[->](d) edge node{} (e);
			\draw[->](e) edge node{} (zero);
			\draw[->](g) edge node{} (zero);
			\draw[->](f) edge node{} (g);
			\draw[->](c) edge node{} (f);
		\end{tikzpicture}
		\caption{The Hasse diagram of \(\stautilt kS_4\)}
		\label{Hasse kS4 2022-07-07 02:39:33}
	\end{figure}

	\noindent
	The functor \(\induc_{A_4}^{S_4}\) takes each colored \(S_4\)-invariant support \(\tau\)-tilting \(kA_4\)-module in \Cref{Hasse kA4 2022-07-07 02:43:05} to that in \Cref{Hasse kS4 2022-07-07 02:39:33} with the same color.
	We remark that even if support \(\tau\)-tilting \(kA_4\)-module \(M\) is basic, its induction \(\induc_{A_4}^{S_4}M\) is not necessarily basic.
	For example, the induced module \(\induc_{A_4}^{S_4}(
	1\oplus
	\begin{smallmatrix}
		1\\
		2
	\end{smallmatrix}\oplus
	\begin{smallmatrix}
		1\\
		3
	\end{smallmatrix})
	\cong
	\begin{smallmatrix}
		1'\\
		1'
	\end{smallmatrix}
	\oplus
	\begin{smallmatrix}
		1'\\
		1'\\
		2'
	\end{smallmatrix}
	\oplus
	\begin{smallmatrix}
		1'\\
		1'\\
		2'
	\end{smallmatrix}
	\)
	is not basic.
\end{example}

\subsection*{Acknowledgements}
The author would like to thank Yuta~Kozakai for useful advice, discussions and comments.
% \(\stautilt B \tilde{B}\)

% \bibliographystyle{abbrv}
% \bibliography{KoshioCite}

\begin{thebibliography}{10}

	\bibitem{MR3848421}
	T.~Adachi, T.~Aihara, and A.~Chan.
	\newblock Classification of two-term tilting complexes over {B}rauer graph
	algebras.
	\newblock {\em Math. Z.}, 290(1-2):1--36, 2018.

	\bibitem{MR3187626}
	T.~Adachi, O.~Iyama, and I.~Reiten.
	\newblock {$\tau$}-tilting theory.
	\newblock {\em Compos. Math.}, 150(3):415--452, 2014.

	\bibitem{MR860771}
	J.~L. Alperin.
	\newblock {\em Local representation theory}, volume~11 of {\em Cambridge
			Studies in Advanced Mathematics}.
	\newblock Cambridge University Press, Cambridge, 1986.

	\bibitem{MR4139031}
	S.~Asai.
	\newblock Semibricks.
	\newblock {\em Int. Math. Res. Not. IMRN}, 2020(16):4993--5054, 2018.

	\bibitem{MR3220536}
	T.~Br\"{u}stle and D.~Yang.
	\newblock Ordered exchange graphs.
	\newblock In {\em Advances in representation theory of algebras}, EMS Ser.
	Congr. Rep., pages 135--193. Eur. Math. Soc., Z\"{u}rich, 2013.

	\bibitem{MR3428959}
	G.~Jasso.
	\newblock Reduction of {$\tau$}-tilting modules and torsion pairs.
	\newblock {\em Int. Math. Res. Not. IMRN}, (16):7190--7237, 2015.

	\bibitem{MR3178243}
	S.~Koenig and D.~Yang.
	\newblock Silting objects, simple-minded collections, {$t$}-structures and
	co-{$t$}-structures for finite-dimensional algebras.
	\newblock {\em Doc. Math.}, 19:403--438, 2014.

	\bibitem{KK2}
	R.~Koshio and Y.~Kozakai.
	\newblock Induced modules of support {$\tau$}-tilting modules and extending
	modules of semibricks over blocks of finite groups.
	\newblock {\em arXiv preprint arXiv:2112.08897v3}, 2021.

	\bibitem{MR998775}
	H.~Nagao and Y.~Tsushima.
	\newblock {\em Representations of finite groups}.
	\newblock Academic Press, Inc., Boston, MA, 1989.

	\bibitem{MR3531045}
	Y.~Zhang and Z.~Huang.
	\newblock {$G$}-stable support {$\tau$}-tilting modules.
	\newblock {\em Front. Math. China}, 11(4):1057--1077, 2016.

\end{thebibliography}

\Addresses
\end{document}